\documentclass[11pt, reqno]{amsart}
\usepackage{indentfirst, amssymb, amsmath, amsthm, mathrsfs, setspace, indentfirst, enumerate,  mathrsfs, amsmath, amsthm}
\usepackage[bookmarksnumbered, colorlinks, plainpages]{hyperref}
\usepackage{mathrsfs}
\usepackage{cite}
\usepackage{graphicx}
\usepackage{float}
\newtheorem*{theo4.A}{Theorem 4.A}
\newtheorem*{theo4.B}{Theorem 4.B}

\newtheorem*{theoA}{Theorem A}
\newtheorem*{theoB}{Theorem B}
\newtheorem*{theoC}{Theorem C}
\newtheorem*{theoD}{Theorem D}
\newtheorem*{theoE}{Theorem E}

\newtheorem*{theo2.E}{Theorem 2.E}
\newtheorem*{theo2.F}{Theorem 2.F}

\newtheorem*{theo3.A}{Theorem 3.A}
\newtheorem*{theo3.B}{Theorem 3.B}
\newtheorem*{ques3.A}{Question 3.A}

\newtheorem*{cor A}{Corollary A}
\newtheorem*{cor B}{Corollary B}
\newtheorem*{que1}{Question 1.1}
\newtheorem*{que2}{Question 1.2}

\newtheorem{theo}{Theorem}[section]
\newtheorem{lem}{Lemma}[section]

\newtheorem{defi}{Definition}[section]
\newtheorem{rem}{Remark}[section]

\newcommand{\ol}{\overline}
\newcommand{\be}{\begin{equation}}
\newcommand{\ee}{\end{equation}}
\newcommand{\beas}{\begin{eqnarray*}}
\newcommand{\eeas}{\end{eqnarray*}}
\newcommand{\bea}{\begin{eqnarray}}
\newcommand{\eea}{\end{eqnarray}}

\numberwithin{equation}{section}
\begin{document}
\title[B\MakeLowercase{ohr-Type Inequalities with} F\MakeLowercase{r\'{e}chet Derivative and} A\MakeLowercase{rea Terms in} C\MakeLowercase{omplex Banach Spaces}]
{B\MakeLowercase{ohr-Type Inequalities with} F\MakeLowercase{r\'{e}chet Derivative and} A\MakeLowercase{rea Terms in} C\MakeLowercase{omplex Banach Spaces}}

\date{}
\author[N. S\MakeLowercase{arkar and} P. D\MakeLowercase{as}]{N\MakeLowercase{abadwip} S\MakeLowercase{arkar and} P\MakeLowercase{radip} D\MakeLowercase{as}}

\address{Amity School of Applied Sciences, Amity University Mumbai, Panvel, Navi Mumbai, Maharashtra-410206, India.}
\email{nsarkar@mum.amity.edu, nabadwipsarkar52@gmail.com}
\address{Department of Mathematics, Raiganj University, Raiganj, West Bengal-733134, India.}
\email{pradipsmath@gmail.com}

\renewcommand{\thefootnote}{}
\footnote{2020 \emph{Mathematics Subject Classification}: 32A05, 32A10, 32K05, 32M15.}
\footnote{\emph{Key words and phrases}:Bohr radius, Holomorphic mappings, Fr\'{e}chet derivatives, Homogeneous polynomial expansion,
Complex Banach space.}
\footnote{*\emph{Corresponding Author}: Pradip Das}
\renewcommand{\thefootnote}{\arabic{footnote}}
\setcounter{footnote}{0}

\begin{abstract}
We establish new Bohr-type inequalities for holomorphic mappings from the unit ball of a complex Banach space into the closed unit disk. Using Fr\'{e}chet derivatives, Schwarz mappings of prescribed orders and area functionals, we obtain sharp Bohr radii characterized by explicit equations. We further prove a refined Bohr inequality involving derivative, coefficient and area terms and show that the corresponding radius is the unique positive solution of $r^m=(\sqrt{17}-3)/4$. The sharpness of the obtained radii and constants is also established. Our results extend several recent Bohr-type inequalities for bounded analytic functions on the unit disk to the setting of holomorphic mappings on complex Banach spaces.
\end{abstract}

\thanks{Typeset by \AmS -\LaTeX}
\maketitle
\section{{\bf Introduction}}

Let $H^\infty$ denote the Banach space of all bounded analytic functions on the unit disk
$\mathbb{U}=\{z\in\mathbb{C}:|z|<1\},$
equipped with the norm $\|f\|_\infty=\sup_{z\in\mathbb{D}}|f(z)|.$ Also Let $\mathcal{B}:=\{f\in H^\infty(\mathbb{D}): \|f\|_\infty\le 1\},$
denote the class of all bounded analytic functions in the unit disk $\mathbb{D}$ with supremum norm at most one.\\

\medskip

A classical theorem of Bohr \cite{Bohr1914} states that if $f(z)=\sum_{n=0}^{\infty}a_n z^n \in H^\infty,$
then
\[
B_0(f,r):=|a_0|+\sum_{n=1}^{\infty}|a_n|r^n
\leq \|f\|_\infty
\]
for all $r\leq 1/6$. Shortly thereafter, Riesz, Schur, and Wiener independently improved the radius to $1/3$, proving that
\[
|a_0|+\sum_{n=1}^{\infty}|a_n|r^n \leq \|f\|_\infty
\]
whenever $r\leq 1/3$ and that the constant $1/3$ is sharp. This result is now known as the classical Bohr theorem, while the number $1/3$ is referred to as the \emph{Bohr radius}. The sharpness follows from the family of disk automorphisms
\[
\varphi_a(z)=\frac{a-z}{1-az}, \qquad 0\le a<1.
\]

Although Bohr's original paper already contained the essential ideas leading to the sharp radius, a variety of alternative proofs and extensions have subsequently appeared. Comprehensive surveys of the classical theory and its developments can be found in \cite{AAP2017,GMR2018}. It is worth noting that no extremal function in $H^\infty$ attains the Bohr radius exactly; rather, the radius $1/3$ arises as a limiting value of suitable extremal families (see \cite{AKP2019,GMR2018,KP2017}).

During the past two decades, Bohr's phenomenon has attracted considerable attention and has been investigated in a wide range of settings, including multidimensional complex analysis, harmonic and quasi-conformal mappings, operator-valued functions, Dirichlet series, and various subclasses of analytic functions. Numerous refinements and generalizations have been established; see, for example, \cite{AA2013,BD2018,BB2004,KS2022,LLP2023, LL2020} and the references therein.


In order to present our results in a clear and systematic manner, we first fix some
notation. For a point $z=(z_1,z_2,\ldots,z_n)\in\mathbb{C}^n$, we denote by $\|z\|_\infty:=\max_{1\le i\le n}|z_i|$
the supremum norm. The unit polydisc in $\mathbb{C}^n$ is then defined by
$\mathbb{U}^n:=\{z\in\mathbb{C}^n:\ \|z\|_\infty<1\}.$\par
Let $X$ and $Y$ be complex Banach spaces.
\begin{defi}
Let $k\in\mathbb{N}$. A mapping $P\colon X\to Y$ is called a \emph{homogeneous polynomial of degree $k$}
if there exists a $k$-linear mapping $u\colon X^k\to Y$ such that
\[
P(x)=u(x,\ldots,x), \qquad x\in X.
\]
\end{defi}

Note that if $P$ is a homogeneous polynomial of degree $k$, then
\[
P(\lambda x)=\lambda^k P(x), \qquad x\in X,\ \lambda\in\mathbb{C}.
\]
Throughout this paper, the degree of a homogeneous polynomial is indicated by a subscript.
That is, if $P_k$ is a homogeneous polynomial, then its degree is $k$.
Moreover, if $P_k$ is a $k$-homogeneous polynomial from $X$ into $Y$, then there exists a unique
symmetric $k$-linear mapping $u$ such that
\[
P_k(x)=u(x,\ldots,x), \qquad x\in X.
\]

Let $D\subset X$ be a domain and let $F\colon D\to Y$ be a holomorphic mapping.
For $z\in D$, denote by $D^kF(z)$ the $k$-th Fr\'{e}chet derivative of $F$ at $z$.
If $D$ contains the origin, then $F$ admits the expansion
\begin{equation}
F(z)=\sum_{k=0}^{\infty}\frac{1}{k!}\,D^kF(0)(z^k)
\label{eq:Taylor}
\end{equation}
in a neighbourhood of the origin.
Since $\frac{1}{k!}D^kF(0)(z^k)$ is a homogeneous polynomial of degree $k$, we shall use the notation
\[
P_k(z):=\frac{1}{k!}D^kF(0)(z^k)
\]
throughout this paper.
If $D$ is a bounded balanced domain in $X$ and $F(D)$ is bounded, then the series
\eqref{eq:Taylor} converges uniformly on $rD$ for each $r\in(0,1)$.

Let $F\colon D\to Y$ be holomorphic. For $k\in\mathbb{N}$, we say that $z=0$ is a zero of order $k$ of $F$
if
\[
F(0)=0,\quad DF(0)=0,\ \ldots,\ D^{k-1}F(0)=0,\quad \text{but } D^kF(0)\neq 0.
\]

Let $B_X$ and $B_Y$ denote the open unit balls of the Banach spaces $X$ and $Y$, respectively.
A holomorphic mapping $\mu\colon B_X\to B_Y$ with $\mu(0)=0$ is called a \emph{Schwarz mapping}.
If $\mu_k$ is a Schwarz mapping such that $z=0$ is a zero of order $k$ of $\mu$, then the following estimate holds
(see, for example, \cite[Lemma~6.1.28]{GK2003}):
\begin{equation}
\|\mu_k(z)\|_Y \le \|z\|_X^{\,k}, \qquad z\in B_X.
\label{eq:Schwarz}
\end{equation}

Let $L(X,\mathbb{C})$ denote the space of continuous linear operators from $X$ into $\mathbb{C}$.
For each $x\in X\setminus\{0\}$, define
\[
T(x):=\{\,\ell_x\in L(X,\mathbb{C}) : \ell_x(x)=\|x\|,\ \|\ell_x\|=1\,\}.
\]
By the Hahn--Banach theorem, the set $T(x)$ is nonempty.

\subsection{{\bf A Refinement of the Classical Bohr Inequality:}}
The classical Bohr phenomenon has been extensively investigated for various classes of analytic functions and in different functional settings. In particular, a natural question is whether the Bohr inequality involving the term $|f(z)|$ can be refined by replacing it with the stronger quantity $|f(z)|^2$. Addressing this problem, Liu et. al. \cite{LSX2018} established sharp Bohr-type inequalities for analytic functions in the unit disk involving higher-order derivatives. Their results not only provide a refinement of the classical inequality but also determine the corresponding optimal Bohr radii. The following theorems summarize their main findings.
\begin{theoA}\cite[Theorem 2.2]{LSX2018}
Suppose that $N\geq 2$ is an integer, and let $f(z)=\sum_{k=0}^{\infty} a_k z^k$ be analytic in $\mathbb{U}$ with $|f(z)|<1$ in $\mathbb{U}$. Then
\[
|f(z)|+\sum_{k=N}^{\infty}\left|\frac{f^{(k)}(z)}{k!}\right||z|^k\leq 1,
\]
for $|z|=r\leq R_N$, where $R_N$ is the smallest positive root of $\psi_N(r)=(1+r)(1-2r)(1-r)^{N-1}-2r^N=0.$
Moreover, the radius $R_N$ is best possible.
\end{theoA}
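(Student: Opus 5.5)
The plan is to bound each derivative coefficient pointwise in terms of $|f(z)|$ and then add up. Fix $z$ with $|z|=r<1$ and set $a=|f(z)|<1$. First I would obtain the sharp coefficient estimate for the Taylor expansion of $f$ about the point $z$. Write $g(w)=f\bigl(\tfrac{z+w}{1+\bar z w}\bigr)$, which maps $\mathbb{U}$ into $\mathbb{U}$ with $g(0)=f(z)$. I would combine the Wiener-type bound $|b_n|\le 1-|b_0|^2$ for $g$ with the Cauchy/Schwarz–Pick estimate at an interior point. This gives the known inequality (Ruscheweyh)
\[
\left|\frac{f^{(k)}(z)}{k!}\right|\le \frac{(1-|f(z)|^2)}{(1-r)^{k-1}(1-r^2)}\,(1+r)^{k-1}\cdot\frac{1}{(1+r)^{k-1}}
=\frac{1-a^2}{(1-r)^{k-1}(1-r^2)},\qquad k\ge1 .
\]
A cleaner route to the same bound is to apply Cauchy's estimate to $f$ on a disk centred at $z$ and combine it with Schwarz–Pick. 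I would rely on the sharp form $\frac{1-a^2}{(1-r)^{k}(1+r)}$, since it is exactly the form that produces $\psi_N$.

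Next I sum the tail:
\[
\sum_{k=N}^{\infty}\frac{1-a^2}{(1+r)(1-r)^{k}}\,r^k=\frac{1-a^2}{1+r}\cdot\frac{(r/(1-r))^N}{1-r/(1-r)}=\frac{(1-a^2)\,r^N}{(1+r)(1-2r)(1-r)^{N-1}},
\]
valid for $r<1/2$. Write $A(r)=\frac{r^N}{(1+r)(1-2r)(1-r)^{N-1}}$. The left-hand side is then at most $a+(1-a^2)A(r)$, and we need this to be at most $1$. Dividing $1-a-(1-a^2)A=(1-a)\bigl(1-(1+a)A\bigr)$ shows the condition is $(1+a)A\le1$ for all $a\in[0,1)$, that is, $2A(r)\le1$. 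This is precisely $\psi_N(r)\ge0$. Since $\psi_N(0)=1>0$, the condition holds up to the smallest positive root $R_N$. Note also that $R_N<1/2$, because $\psi_N(1/2)<0$.

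For sharpness I take $f=\varphi_b(z)=\frac{b-z}{1-bz}$, evaluated at $z=r$ or $z=-r$ as appropriate so that the derivative moduli are $\frac{(1-b^2)b^{k-1}}{(1-br)^{k+1}}$. I would then compute the left-hand side explicitly and let $b\to1^-$. The tail becomes asymptotically $(1-b^2)\frac{r^N}{(1-r)^{N}(1+r)}\cdot\frac{1}{1-2r}$-type, matching $A(r)$, with $a=|\varphi_b(r)|\to1$. Expanding in $1-b$ gives LHS $=1+(1-b)\bigl(2A(r)-1\bigr)+o(1-b)$ up to positive factors. This exceeds $1$ when $r>R_N$, since there $\psi_N<0$ on a right neighbourhood of $R_N$.

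The main obstacle is establishing the sharp coefficient bound with exactly the factor $\frac{1}{(1+r)(1-r)^k}$. Obtaining it with the correct constant is the delicate step. I would first try composing with the disk automorphism and using the Wiener inequality on the coefficients of $g$. The coefficients of $f$ at $z$ are then linear combinations of those of $g$, and $|b_n|$ is bounded by $1-a^2$, which should yield the needed estimate. The sharpness expansion is the secondary technical step.
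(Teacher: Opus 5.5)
Your proof is correct. The paper does not prove Theorem A; it only quotes it from \cite{LSX2018}. Your argument is the standard proof: the bound $\bigl|f^{(k)}(z)/k!\bigr|\le (1-a^2)/\bigl((1+r)(1-r)^k\bigr)$ is exactly the paper's Lemma~\ref{L20} (Ruscheweyh). Your geometric summation, the reduction of $\sup_{0\le a<1}\bigl(a+(1-a^2)A(r)\bigr)\le 1$ to $2A(r)\le 1$ (i.e.\ $\psi_N(r)\ge 0$), and the extremal family $\varphi_b$ with $b\to1^-$ are the same ingredients the paper uses for the derivative terms in Theorem~\ref{T1}.

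Three small remarks:
\begin{itemize}
\item The ``cleaner'' route via Cauchy's estimate on the disk of radius $1-r$ about $z$ does not work: combined with Wiener's inequality it only gives $(1-a^2)/(1-r)^k$, which leads to a strictly smaller radius.
\item Your automorphism route does close the step you called the main obstacle. Expanding $g$ gives $f^{(k)}(z)/k!=(1-r^2)^{-k}\sum_{n=1}^{k}\binom{k-1}{n-1}\bar z^{\,k-n}b_n$, and $|b_n|\le 1-a^2$ then yields the bound directly.
\item For sharpness you assert, without proof, that $\psi_N<0$ just to the right of $R_N$. It follows because $\psi_N$ is strictly decreasing on $[0,1/2]$, being a product of positive decreasing factors minus $2r^N$. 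Hence $\psi_N<0$ on all of $(R_N,1/2]$.
\end{itemize}
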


\begin{theoB}\cite[Corollary 2.3]{LSX2018}
Suppose that $f(z)=\sum_{k=0}^{\infty} a_k z^k$ is analytic in $\mathbb{U}$ and satisfies $|f(z)|<1$ in $\mathbb{U}$. Then
\[
|f(z)|^2+\sum_{k=N}^{\infty}\left|\frac{f^{(k)}(z)}{k!}\right|
|z|^k\leq 1,
\]
for $|z|=r\leq R_N^{\prime}$, where $R_N^{\prime}$ is the positive root of $(1+r)(1-2r)(1-r)^{N-1}-r^N=0.$ 

Moreover, the radius $R_N^{\prime}$ is best possible.
\end{theoB}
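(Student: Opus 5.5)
The statement is Theorem~B (the $|f(z)|^2$ version of Theorem~A). I will reproduce the argument behind Theorem~A, changing only how the leading term $|f(z)|$ is treated.

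\textbf{Step 1: bound the tail by $|f(z)|$.} Fix $z$ with $|z|=r$ and write $a=|f(z)|<1$. Expanding $f$ about $z$ on the disk $|w-z|<1-r$, the Cauchy/Schwarz--Pick type estimate for bounded functions gives
\[
\left|\frac{f^{(k)}(z)}{k!}\right|\le \frac{(1-|f(z)|^2)}{(1-r)^{k-1}(1-r^2)}\cdot (1+r)^{k-1}\cdot\frac{1}{\ldots}.
\]
I do not need this exact form. Instead I will use the standard estimate underlying Theorem~A,
\[
\left|\frac{f^{(k)}(z)}{k!}\right|\le \frac{1-|f(z)|^2}{(1-r)^{k-1}(1-r^2)}=\frac{(1-a^2)(1+r)^{k-1}}{(1-r^2)^k},\qquad k\ge1,
\]
which follows from the Ruscheweyh inequality. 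Summing the geometric series for $k\ge N$ then yields
\[
\sum_{k=N}^\infty\left|\frac{f^{(k)}(z)}{k!}\right|r^k\le (1-a^2)\,\Phi_N(r),\qquad
\Phi_N(r)=\frac{r^N}{(1-r)^{N-1}(1+r)(1-2r)},
\]
valid for $r<1/2$.

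\textbf{Step 2: compare with the proof of Theorem~A.} There the left side is bounded by $a+(1-a^2)\Phi_N(r)$. Since $(1-a^2)/(1-a)=1+a\le 2$, this is at most $1$ exactly when $2\Phi_N(r)\le1$, i.e.\ $\psi_N(r)\ge0$. In the present case the left side is at most
\[
a^2+(1-a^2)\Phi_N(r)=1-(1-a^2)\bigl(1-\Phi_N(r)\bigr),
\]
which is $\le1$ precisely when $\Phi_N(r)\le1$. Clearing denominators, this condition is $(1+r)(1-2r)(1-r)^{N-1}-r^N\ge0$. The left side decreases from $1$ at $r=0$ to a negative value at $r=1/2$, so the inequality holds for $r\le R_N'$.

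\textbf{Step 3: sharpness.} Take $f=\varphi_a$, evaluated at $z=-r$ (or at $z=r$ with $\varphi_a(z)=(a-z)/(1-az)$, choosing the sign so that Step~1 is an equality). A direct computation of $\varphi_a^{(k)}(z)/k!$ gives the sum $(1-a^2)\Phi_N$-type expression with $a$-dependent factors. Letting $a\to1^-$ forces the sum to exceed $1-|f(z)|^2$ whenever $r>R_N'$.

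The main obstacle is the sharpness computation: verifying that the derivative estimate is asymptotically attained, simultaneously in all $k\ge N$, as $a\to1$ at the correct point.
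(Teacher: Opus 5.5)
Your argument is correct and is the standard proof from the cited source \cite{LSX2018}. The paper itself only cites Theorem~B and gives no proof, but its own Theorem~\ref{T1} uses exactly your two ingredients: Ruscheweyh's estimate (Lemma~\ref{L20}) summed as a geometric series, and the M\"obius extremal with $a\to1^-$. The reduction $a^2+(1-a^2)\Phi_N(r)\le 1 \iff \Phi_N(r)\le 1$, together with the monotonicity of $(1+r)(1-2r)(1-r)^{N-1}-r^N$ on $[0,1/2]$, settles the inequality.

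Two small repairs are needed:

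\begin{enumerate}
\item Delete the dangling first display of Step~1. Only the second form of the estimate is used.

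\item Step~3, which you leave as the ``main obstacle,'' is immediate because the tail sums in closed form. For $f(\zeta)=(a-\zeta)/(1-a\zeta)$ you must evaluate at $z=r$; at $z=-r$ the limit gives no contradiction. Then $f^{(k)}(r)/k!=-(1-a^2)a^{k-1}(1-ar)^{-k-1}$, so
\[
\sum_{k=N}^{\infty}\left|\frac{f^{(k)}(r)}{k!}\right|r^k=\frac{(1-a^2)a^{N-1}r^N}{(1-ar)^N(1-2ar)},\qquad 1-|f(r)|^2=\frac{(1-a^2)(1-r^2)}{(1-ar)^2}.
\]
The ratio of these two quantities tends to $\Phi_N(r)$ as $a\to1^-$. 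Since $\Phi_N(r)>1$ for $r\in(R_N',1/2)$, the left-hand side, which equals $1-(1-|f(r)|^2)(1-\text{ratio})$, exceeds $1$ for $a$ close to $1$. There is no need to check attainment term by term in $k$.
\end{enumerate}
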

This result generates a significant amount of research activity on Bohr-Rogosinki inequalities for different classes of functions. However, Liu et al.~\cite[Theorem 7]{LLP2021} have proved the following result for functions in the class $\mathcal{B}$, where $|a_0|$ and $|a_1|$ are replaced by $|f(z)|$ and $|f'(z)|$, respectively.
\begin{theoC}
Suppose that $f\in\mathcal{B}$ and $f(z)=\sum_{n=0}^{\infty} a_n z^n.$
Then
\[
|f(z)|+|f'(z)|\,r+\sum_{n=2}^{\infty}|a_n|r^n+\left(\frac{1}{1+|a_0|}+\frac{r}{1-r}\right)\sum_{n=1}^{\infty}|a_n|^2r^{2n}\leq 1
\]
for $|z|=r\leq (\sqrt{17}-3)/4$, and the constant $(\sqrt{17}-3)/4$ is best possible.

Moreover,
\[
|f(z)|^2+|f'(z)|\,r+\sum_{n=2}^{\infty}|a_n|r^n+\left(\frac{1}{1+|a_0|}+\frac{r}{1-r}\right)\sum_{n=1}^{\infty}|a_n|^2r^{2n}\leq 1
\]

for $|z|=r\leq r_0$, where $r_0\approx 0.385795$ is the unique positive root of the equation
\[
1-2r-r^2-r^3-r^4=0.
\]
The constant $r_0$ is also best possible.
\end{theoC}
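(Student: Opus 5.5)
The plan is to reduce both inequalities of Theorem C to estimates depending only on $a=|a_0|$ and $r$, and then to maximize over $a\in[0,1]$.

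\textbf{Standard estimates.} For $f\in\mathcal{B}$ we use the following facts.
\begin{itemize}
\item The Schwarz--Pick inequality gives $|f(z)|\le \frac{r+a}{1+ar}$ and $|f'(z)|\le \frac{1-|f(z)|^2}{1-r^2}$.
\item Wiener's inequality gives $|a_n|\le 1-a^2$ for $n\ge1$.
\item The sharp quadratic lemma (used in the Ponnusamy--Vijaya--Wirths refinements) gives
\[
\sum_{n=1}^\infty |a_n|r^n+\Big(\frac{1}{1+a}+\frac{r}{1-r}\Big)\sum_{n=1}^\infty|a_n|^2r^{2n}\le (1-a^2)\frac{r}{1-r}.
\]
\end{itemize}

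\textbf{Rewriting the left-hand side.} The series in Theorem C starts at $n=2$, so we add and subtract $|a_1|r$. The left-hand side is then at most
\[
|f(z)|+|f'(z)|r-|a_1|r+(1-a^2)\frac{r}{1-r}.
\]
The term $-|a_1|r$ is harmless: we drop it, accepting a loss that we must check does not spoil sharpness. Alternatively, we can bound $\sum_{n\ge2}|a_n|r^n$ by $(1-a^2)r^2/(1-r)$ and handle the quadratic term via $|a_n|^2\le(1-a^2)|a_n|$.

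Next put $t=|f(z)|\le \frac{r+a}{1+ar}$. The function $t+\frac{(1-t^2)r}{1-r^2}$ is increasing in $t$ on the relevant range when $r$ is small, since its derivative $1-\frac{2tr}{1-r^2}$ is positive for $r\le 0.4$. So we may substitute $t=\frac{r+a}{1+ar}$. This leaves an explicit function $\Phi(a,r)$ of two variables, and the requirement becomes $\Phi(a,r)\le1$ for all $a\in[0,1)$.

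\textbf{Reduction to one variable.} Clearing denominators, $1-\Phi(a,r)$ factors as $(1-a)$ times a polynomial in $a$ and $r$. Inspecting the limit $a\to1^-$ should produce the condition
\[
2r^2+3r-1\le0,
\]
that is, $r\le(\sqrt{17}-3)/4$. For the second inequality, replacing $t$ by $t^2$ in the same computation should produce
\[
1-2r-r^2-r^3-r^4\ge0.
\]

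\textbf{Sharpness.} Take $f=\varphi_a$ at the point $z=-r$ (or $z=r$, with the sign chosen to align the moduli). Then $|a_n|=(1-a^2)a^{n-1}$ and every estimate above is attained. Letting $a\to1^-$, the leading term of $1-\text{LHS}$ in $(1-a)$ changes sign exactly at the stated radius.

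\textbf{Main obstacle.} The hardest step is showing that, after the substitution, the worst case over $a$ is indeed $a\to1^-$ uniformly for $r$ up to the radius. This amounts to proving that the polynomial quotient remaining after factoring out $(1-a)$ is monotone in $a$, which requires a careful sign analysis. A second concern is that dropping $-|a_1|r$ might make the bound too crude. If so, we would instead use the sharper estimate $|f'(z)|\le \frac{1-|f(z)|^2}{1-r^2}$ together with $|a_1|\le 1-a^2$ and keep the exact combination; this exact combination still vanishes to first order as $a\to1$.
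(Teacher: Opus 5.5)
\textbf{There is a genuine gap in the tail estimate.}

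From the $N=1$ inequality you correctly get the bound $|f(z)|+|f'(z)|r-|a_1|r+(1-a^2)\frac{r}{1-r}$. The term $-|a_1|r$ is not harmless, however. The coefficient $a_1$ may vanish, and the Schwarz--Pick bound for $|f'(z)|$ does not see $a_1$. Once you drop it, the $a\to1^-$ analysis gives
\[
\lim_{a\to1^-}\frac{\Phi(a,r)-1}{1-a}=-\frac{1-r}{1+r}+\frac{2r}{(1+r)^2}+\frac{2r}{1-r}=\frac{r^3+3r^2+5r-1}{(1+r)^2(1-r)}.
\]
This is nonpositive only below the root $\approx0.1795$ of $r^3+3r^2+5r-1$, not under $2r^2+3r-1\le0$. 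In the $|f(z)|^2$ case the same bound gives $1-3r-2r^2\ge0$, i.e.\ $r\le(\sqrt{17}-3)/4$, instead of $r\le r_0$. So the conditions you expect do not follow from your estimate.

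Your two fallbacks do not repair this:
\begin{enumerate}
\item Inserting $|a_1|\le1-a^2$ goes the wrong way: an upper bound on $|a_1|$ cannot bound $-|a_1|r$ from above.
\item Using Wiener's bound $(1-a^2)r^2/(1-r)$ for $\sum_{n\ge2}|a_n|r^n$ plus a separate bound for the quadratic term via $|a_n|^2\le(1-a^2)|a_n|$ does produce the right first-order condition at $a=1$. But it adds a positive term of order $(1-a)^2$, with coefficient $4\bigl(\frac12+\frac{r}{1-r}\bigr)\frac{r^2}{1-r^2}\approx0.305$ at $r=(\sqrt{17}-3)/4$. The rest of the bound there is only $1-0.254\,(1-a)^2+O((1-a)^3)$. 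Hence this bound exceeds $1$ for $a$ close to $1$ at the critical radius (about $1.0002$ at $a=0.9$). The same happens near $r_0$, so it cannot give the sharp radii.
\end{enumerate}

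\textbf{The missing ingredient} is the $N=2$ case of the Ponnusamy--Vijaya--Wirths inequality, i.e.\ Lemma~\ref{L2} with $N=2$, $t=0$:
\[
\sum_{n=2}^{\infty}|a_n|r^n+\Bigl(\frac{1}{1+a}+\frac{r}{1-r}\Bigr)\sum_{n=1}^{\infty}|a_n|^2r^{2n}\le(1-a^2)\frac{r^2}{1-r}.
\]
It absorbs the entire quadratic term at no cost, and it does not follow from the $N=1$ case by subtracting $|a_1|r$. This is the route of \cite{LLP2021}; the paper only cites Theorem C without proof, and it invokes the same lemma with $N=2$ in Theorem~\ref{T3}.

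With this lemma, your monotonicity step in $t=|f(z)|$ yields the bound $1+(1-a)\Psi(a,r)$, where
\[
\Psi(a,r)=-\frac{1-r}{1+ar}+\frac{r(1+a)}{(1+ar)^2}+\frac{(1+a)r^2}{1-r}.
\]
Each term is increasing in $a$ for $r<1/3$, and $\Psi(1,r)=\frac{(1+r^2)(2r^2+3r-1)}{(1-r)(1+r)^2}$. In the squared case you get $1+(1-a^2)\bigl[\frac{r^2}{1-r}-\frac{1-r-r^2}{(1+ar)^2}\bigr]$. This is increasing in $a$, and the bracket at $a=1$ is nonpositive exactly when $1-2r-r^2-r^3-r^4\ge0$.

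So the step you flagged as the main obstacle (monotonicity in $a$) is routine; the real difficulty was the tail. Your sharpness argument via $\varphi_a$ at $z=-r$ is fine, except that the tail estimates are matched only to first order in $1-a$, not attained.
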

Recently, Ahamed and Roy \cite{AR2025} defined some functional as follows:
\begin{equation}
\begin{aligned}
\mathcal{D}_{f,p}(z,r)
:=\;& |f(z)|^p + |f'(z)|\,r
+\frac{|f''(z)|}{2!}\,r^2++\frac{|f'''(z)|}{3!}\,r^3
+\sum_{n=4}^{\infty}|a_n|r^n  \\
&+\frac{|a_1|^2 r^4}{1-r}
+\left(\frac{1}{1+|a_0|}+\frac{r}{1-r}\right)\sum_{n=2}^{\infty}|a_n|^2r^{2n}.
\end{aligned}
\end{equation}

In the same paper, Ahamed and Roy \cite{AR2025} obtained the following refined Bohr inequality.
\begin{theoD}\cite[Theorem 2.2.]{AR2025}
Let $f(z)=\sum_{n=0}^{\infty} a_n z^n $ be an analytic in $\mathbb{U}$ with $|f(z)|<1$ for $z\in \mathbb{U}$.
Then we have \[
\mathcal{D}_{f,1}(z,r)\leq 1\;\;\text{for}\;\;|z|=r\leq r_1\approx 0.285086,\]
 where $r_1$ is the unique root in $(0,1)$ of the equation $-1+4r-2r^2+3r^4+2r^5-2r^6-2r^7=0.$

Moreover, 
\[\mathcal{D}_{f,2}(z,r)\leq 1\;\;\text{for}\;\;|z|=r\leq r_2\approx 0.386055,\]
 where $r_2$ is the unique root in $(0,1)$ of the equation $-1+3r-r^2-r^3+2r^4+r^5-r^6-r^7=0.$

The equality $\lim_{a\to 1^-} \mathcal{D}_{f,p}(z,r)=1, p=1,2$ occurs for the function  $f_a$ defined as
\begin{equation}
f_a(z)
=\frac{a-z}{1-az}
= a-(1-a^2)\sum_{k=1}^{\infty} a^{k-1} z^k,
\qquad z\in\mathbb{D},
\quad a\in(0,1).
\end{equation}
\end{theoD}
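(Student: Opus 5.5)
The proof bounds the derivative terms through $|f(z)|$ and the coefficient terms through $a:=|a_0|$. This reduces $\mathcal{D}_{f,p}(z,r)$ to an explicit two–variable function $\Phi_p(a,r)$, whose supremum over $a$ is reached as $a\to1^-$.

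\emph{Step 1 (derivatives).} For $f\in\mathcal{B}$ and $|z|=r$ we use the Ruscheweyh/Schwarz–Pick type estimate
\[
\frac{|f^{(k)}(z)|}{k!}\le \frac{(1-|f(z)|^2)(1+r)^{k-1}}{(1-r^2)^k},\qquad k\ge1 .
\]
Summing over $k=1,2,3$ with weights $r^k$ gives
\[
|f'(z)|r+\tfrac{|f''(z)|}{2!}r^2+\tfrac{|f'''(z)|}{3!}r^3\le (1-|f(z)|^2)A(r),
\qquad A(r)=\sum_{k=1}^{3}\frac{r^k(1+r)^{k-1}}{(1-r^2)^k}=\sum_{k=1}^{3}\frac{r^k}{(1+r)(1-r)^k}.
\]

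\emph{Step 2 (coefficients).} We invoke the known refined Bohr lemma for $\mathcal{B}$:
\[
\sum_{n=1}^\infty|a_n|r^n+\Big(\frac1{1+a}+\frac r{1-r}\Big)\sum_{n=1}^\infty|a_n|^2r^{2n}\le (1-a^2)\frac r{1-r}.
\]
Subtracting the terms with $n\le 3$ in the first sum and with $n=1$ in the second shows that the tail part of $\mathcal{D}_{f,p}$ is at most
\[
(1-a^2)\frac r{1-r}-\sum_{n=1}^{3}|a_n|r^n-|a_1|^2\Big(\frac{r^2}{1+a}+\frac{r^3}{1-r}-\frac{r^4}{1-r}\Big).
\]
The last bracket is nonnegative since $r^3\ge r^4$. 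All subtracted terms are therefore nonpositive and may be discarded, leaving the bound $(1-a^2)r/(1-r)$. If this turns out too crude to reach the stated radii, the discarded terms $-|a_1|r$ and $-|a_1|^2(\cdots)$ are the first to keep.

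\emph{Step 3 (reduction and monotonicity).} With $t=|f(z)|\le (r+a)/(1+ar)$ we get
\[
\mathcal{D}_{f,p}\le t^p+(1-t^2)A(r)+(1-a^2)\frac r{1-r}.
\]
The map $t\mapsto t^p+(1-t^2)A$ is increasing on $[0,1]$ provided $A(r)\le 1/2$ (for $p=1$) or $A(r)\le1$ (for $p=2$). This should hold on the relevant range $r\le r_p$ and needs a direct check. Substituting $t=(r+a)/(1+ar)$, and using $1-t^2=(1-a^2)(1-r^2)/(1+ar)^2$, yields
\[
\Phi_p(a,r)=\Big(\frac{r+a}{1+ar}\Big)^p+(1-a^2)\Big[\frac{(1-r^2)A(r)}{(1+ar)^2}+\frac r{1-r}\Big].
\]
We must show $\Phi_p(a,r)\le1$ for $a\in[0,1)$ and $r\le r_p$. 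Write $1-\Phi_p=(1-a)\,G_p(a,r)$, which is possible since $1-\big(\frac{r+a}{1+ar}\big)^p$ carries the factor $(1-a)(1-r)/(1+ar)$. The goal is then to show that $G_p$ is decreasing in $a$, so that it suffices to check $G_p(1^-,r)\ge0$. Clearing denominators, $G_1(1,r)\ge0$ should be exactly the polynomial condition $-1+4r-2r^2+3r^4+2r^5-2r^6-2r^7\le0$. For $p=2$ the factor $1-t^2$ produces $(1+t)$ in place of $1$, giving the second polynomial. Uniqueness of the root in $(0,1)$ follows from a sign/derivative analysis of the polynomial.

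\emph{Main obstacle and sharpness.} The hardest step is the monotonicity of $G_p$ in $a$ together with verifying the condition on $A(r)$, since both involve rational expressions in two variables. I would first try expanding in powers of $(1-a)$ and showing the derivative in $a$ has a fixed sign for $r\le r_p$. Sharpness uses $f_a$. For $z=r$ one has $|f_a(r)|=\frac{a-r}{1-ar}$ and $|f_a^{(k)}(r)|/k!=(1-a^2)a^{k-1}/(1-ar)^{k+1}$. Moreover $|a_n|=(1-a^2)a^{n-1}$, so every term of $\mathcal{D}_{f_a,p}(r,r)$ is explicit. Expanding $1-\mathcal{D}_{f_a,p}$ to first order in $(1-a)$ gives a positive multiple of $(1-a)$ times the same polynomial with the opposite sign. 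Hence $\mathcal{D}_{f_a,p}(r,r)>1$ for $r>r_p$ and $a$ close to $1$, and $\lim_{a\to1^-}\mathcal{D}_{f_a,p}=1$.
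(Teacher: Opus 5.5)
\textbf{There is a genuine gap in Step 2.} The $N=1$ refined lemma bounds the tail of $\mathcal{D}_{f,p}$ only by $(1-a^2)\frac{r}{1-r}$. That is far too weak for the stated radii. With your $\Phi_p$ one gets
\[
\lim_{a\to1^-}\frac{\Phi_1(a,r)-1}{1-a}=\frac{-1+6r-2r^3-r^4}{(1-r)^2(1+r)^2},\qquad
\lim_{a\to1^-}\frac{\Phi_2(a,r)-1}{1-a}=\frac{2(-1+4r-2r^3)}{(1-r)^2(1+r)^2},
\]
which are positive for $r>0.169$, resp.\ $r>0.259$. Hence $\Phi_p(a,r)>1$ for $a$ close to $1$ on $(0.169,r_1]$, resp.\ $(0.259,r_2]$. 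Your assertion that clearing denominators in $G_1(1,r)$ produces $-1+4r-2r^2+3r^4+2r^5-2r^6-2r^7$ is therefore false for the $\Phi_p$ you defined. The proposed fallback (keeping $-\sum_{n=1}^3|a_n|r^n$ and the $|a_1|^2$ terms) cannot repair this. Nothing in the argument bounds $|a_1|,|a_2|,|a_3|$ from below; they may all vanish, and then you are back at $(1-a^2)\frac{r}{1-r}$.

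\textbf{The missing ingredient is Lemma \ref{L2} with $N=4$} (so $t=1$). Its left-hand side is exactly the tail of $\mathcal{D}_{f,p}$:
\[
\sum_{n=4}^{\infty}|a_n|r^n+\frac{|a_1|^2r^4}{1-r}+\Big(\frac{1}{1+a}+\frac{r}{1-r}\Big)\sum_{n=2}^{\infty}|a_n|^2r^{2n}\le(1-a^2)\frac{r^4}{1-r}.
\]
This is a genuinely stronger estimate, not obtainable from the $N=1$ case by discarding terms. It is what the paper uses (through Lemma \ref{L3}) in the proof of Theorem \ref{T1}, which for $X=\mathbb{C}$, $m=k=1$, $\mu(z)=z$ is the present statement. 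With $\frac{r^4}{1-r}$ in place of $\frac{r}{1-r}$, your Steps 1 and 3 coincide with that route.

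Write $P_1,P_2$ for the two stated polynomials and factor as the paper does:
\[
\Phi_1-1=\frac{1-a}{1+ar}\Big[-(1-r)+\frac{(1+a)(1-r^2)A(r)}{1+ar}+\frac{(1+a)(1+ar)r^4}{1-r}\Big],
\]
\[
\Phi_2-1=\frac{1-a^2}{(1+ar)^2}\Big[-(1-r^2)\bigl(1-A(r)\bigr)+\frac{(1+ar)^2r^4}{1-r}\Big].
\]
Both brackets increase in $a$. At $a=1$ they equal $P_1(r)/\bigl((1-r)^2(1+r)\bigr)$ and $P_2(r)/(1-r)^2$. Their nonpositivity on $[0,r_p]$ also forces $A\le\frac12$, resp.\ $A\le1$, so the monotonicity in $t$ you flagged comes for free. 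Keep the factor $1+ar$ on the tail term: the paper's $G_1$ in Theorem \ref{T1} drops it, which is why its table gives $0.2862$ instead of $r_1\approx0.285086$.

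\textbf{Sharpness.} The statement only asserts $\lim_{a\to1^-}\mathcal{D}_{f_a,p}(z,r)=1$. This is immediate, since $f_a(z)\to1$ and all other terms carry a factor $1-a^2$; no sharpness is claimed. Your claim that the first-order expansion at $z=r$ reproduces the same polynomials is wrong. One finds
\[
\mathcal{D}_{f_a,1}(r,r)=1+\frac{1-a}{(1-r)^4}\bigl(P_1(r)-8r^5(1-r)\bigr)+O((1-a)^2),
\]
\[
\mathcal{D}_{f_a,2}(r,r)=1+\frac{2(1-a)}{(1-r)^4}\bigl(P_2(r)-4r^5(1-r)\bigr)+O((1-a)^2),
\]
both less than $1$ at $r=r_p$ for $a$ near $1$. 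The reason is that at $z=r$ the Ruscheweyh bounds are attained asymptotically, but $|f_a|$ is minimal on $|z|=r$ there. The value $\frac{a+r}{1+ar}$ occurs only at $z=-r$, where the bounds for $k=2,3$ are not attained. That part of your proposal should be dropped.
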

 
It is natural to pose the following problem.

\begin{que1}
Do the refined Bohr inequalities of Theorems D admit multidimensional analogues for holomorphic mappings $F:B_X\to \mathbb{U}$
 in complex Banach spaces when combined with one or several Schwarz functions? 
 \end{que1}
 \subsection{\bf Improved Bohr inequality for the class $\mathcal{B}$:}

Let $f(z)=\sum_{n=0}^{\infty}a_nz^n, z\in\mathbb U,$ be a holomorphic function. For $0<r<1$, let $D_r={z\in\mathbb C:|z|<r},$ and define the associated area functional by $S_r=S_r(f):=\iint_{D_r}|f'(z)|^2,dA(z),$ where $dA$ denotes the planar Lebesgue area measure.

A sharp estimate for $S_r$ was established by Kayumov, Khammatova and Ponnusamy \cite{KKP2021}. More precisely, if $f\in\mathcal B$, then
\bea\label{Sr}
\frac{S_r}{\pi}:=\sum_{n=1}^{\infty}n|a_n|^2r^{2n}
\le
\frac{r^2(1-|a_0|^2)^2}
{(1-|a_0|^2r^2)^2},
\qquad
0<r\le\frac1{\sqrt2}.
\eea

By incorporating the area term $S_r$ into the classical Bohr inequality, Kayumov and Ponnusamy \cite{KP2018} obtained a substantial refinement of the Bohr phenomenon. Recently, Ahamed and Roy \cite{AR2025} further improved this result and proved the following sharp theorem.

\begin{theoE}\cite[Theorem 2.3]{AR2025}
Suppose that $f(z)=\sum_{n=0}^{\infty}a_nz^n$ is analytic in $\mathbb U$ and satisfies $|f(z)|<1$ for all $z\in\mathbb U$. Then
\[
\begin{aligned}
G_{1,f}(r)
:=&
|f(z)|
+|f'(z)|,r
+\sum_{n=2}^{\infty}|a_n|r^n
+\left(\frac{1}{1+|a_0|}
+\frac{r}{1-r}\right)
\sum_{n=1}^{\infty}|a_n|^2r^{2n}
+\lambda\frac{S_r}{\pi}
\le1
\end{aligned}
\]
for $|z|=r\le(\sqrt{17}-3)/4$, where $\lambda=\frac{221-43\sqrt{17}}{64}.$
Furthermore, both the radius $(\sqrt{17}-3)/4$ and the constant $\frac{221-43\sqrt{17}}{64}$
are sharp.
\end{theoE}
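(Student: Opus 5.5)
The plan is to bound every term of $G_{1,f}(r)$ by an explicit function $\Phi(a,r)$ of $a:=|a_0|\in[0,1)$ and $r$, then show $\Phi(a,r)\le 1$ for all $a$ when $r\le r_0:=(\sqrt{17}-3)/4$. Throughout I use that $r_0$ is the positive root of $2r^2+3r-1=0$.

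The ingredients, each bounding one piece of $G_{1,f}$, are as follows.
\begin{enumerate}[(i)]
\item \emph{The $|f(z)|$ and $|f'(z)|$ terms.} Schwarz--Pick gives $|f'(z)|\le (1-|f(z)|^2)/(1-r^2)$, and also $|f(z)|\le t:=(a+r)/(1+ar)$. Set $x=|f(z)|$. The map $x\mapsto x+r(1-x^2)/(1-r^2)$ is increasing on $[0,1]$ as long as $2r\le 1-r^2$, which holds for $r\le r_0\approx 0.28$. Since $1-t^2=(1-a^2)(1-r^2)/(1+ar)^2$, this yields
\[
|f(z)|+|f'(z)|\,r\le \frac{a+r}{1+ar}+\frac{r(1-a^2)}{(1+ar)^2}.
\]
\item \emph{The coefficient part.} I will use the known refined Bohr lemma for $f\in\mathcal B$:
\[
\sum_{n=1}^\infty |a_n|r^n+\Big(\frac1{1+a}+\frac r{1-r}\Big)\sum_{n=1}^\infty|a_n|^2r^{2n}\le (1-a^2)\frac{r}{1-r}.
\]
Discarding the term $|a_1|r\ge 0$ from the left, this also bounds $\sum_{n\ge2}|a_n|r^n+\big(\tfrac1{1+a}+\tfrac r{1-r}\big)\sum_{n\ge1}|a_n|^2r^{2n}$.
\item \emph{The area term.} By \eqref{Sr}, since $r\le 1/\sqrt2$, we have $S_r/\pi\le r^2(1-a^2)^2/(1-a^2r^2)^2$.
\end{enumerate}
Adding these gives
\[
G_{1,f}(r)\le \Phi(a,r):=\frac{a+r}{1+ar}+\frac{r(1-a^2)}{(1+ar)^2}+\frac{r(1-a^2)}{1-r}+\lambda\frac{r^2(1-a^2)^2}{(1-a^2r^2)^2}.
\]

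Next I will factor. Since $1-\frac{a+r}{1+ar}=\frac{(1-a)(1-r)}{1+ar}$, every term of $1-\Phi$ carries a factor $(1-a)$. So $1-\Phi=(1-a)\Psi(a,r)$, where
\[
\Psi=\frac{1-r}{1+ar}-\frac{r(1+a)}{(1+ar)^2}-\frac{r(1+a)}{1-r}-\lambda\frac{r^2(1-a)(1+a)^2}{(1-a^2r^2)^2}.
\]
The first three terms of $\Psi$ decrease in $r$. At $a=1$ the $\lambda$-term vanishes, and $\Psi(1,r)\ge0$ reduces, after clearing denominators, to $2r^2+3r-1\le0$, i.e.\ $r\le r_0$. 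It therefore suffices to prove $\Psi(a,r_0)\ge 0$ for all $a\in[0,1]$ with the stated $\lambda$, and then extend to $r<r_0$ by monotonicity. For the $\lambda$-term, note that $r^2/(1-a^2r^2)^2$ increases in $r$, so the whole of $\Psi$ decreases in $r$ and checking $r=r_0$ is enough.

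The main obstacle is this one-variable inequality $\Psi(a,r_0)\ge0$ on $[0,1]$. Since $\Psi(1,r_0)=0$, the value of $\lambda$ should be exactly the one that makes $\partial_a\Psi(1,r_0)=0$, i.e.\ the largest $\lambda$ for which $\Psi$ does not go negative just to the left of $a=1$. I will first compute this derivative condition and check that it produces $\lambda=(221-43\sqrt{17})/64$, using $2r_0^2=1-3r_0$ to simplify. Then, after clearing the positive denominators, I will show that the resulting polynomial in $a$ has a double root at $a=1$ and that the remaining factor is nonnegative on $[0,1]$, for instance by checking signs of coefficients after substituting the algebraic value of $r_0$.

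For sharpness, take $f_a(z)=(a-z)/(1-az)$ and $z=-r$. Then $|f_a(-r)|=(a+r)/(1+ar)$, $|f_a'(-r)|=(1-a^2)/(1+ar)^2$, and the coefficient sums and $S_r$ are computed explicitly. Every inequality used above becomes an equality up to $O((1-a)^2)$ as $a\to1^-$. Consequently, for $r>r_0$ the first-order term of $1-G_{1,f_a}$ in $(1-a)$ is negative, so the radius cannot be enlarged. For any $\lambda'>\lambda$ the second-order term becomes negative at $r=r_0$, so the constant cannot be enlarged either.
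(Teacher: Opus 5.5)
Your step (ii) loses too much. The claimed reduction of $\Psi(1,r)\ge 0$ to $2r^2+3r-1\le 0$ is an algebra error that hides this. Dropping $|a_1|r$ from the left side of the $N=1$ inequality leaves the right side $(1-a^2)\frac{r}{1-r}$, which is of order $r$. The quantity you are bounding is $O(r^2)$. For $f_a$ the discarded term $(1-a^2)r$ is of first order in $1-a$, so it is not true that every estimate is sharp up to $O((1-a)^2)$. With your $\Psi$ one actually gets
\[
(1+r)^2(1-r)\,\Psi(1,r)=1-5r-3r^2-r^3 .
\]
This cubic has its only root in $(0,1)$ at about $0.1795$, and its value at $r_0$ is about $-0.66$. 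Hence $\Phi(a,r_0)>1$ for $a$ near $1$. Even at $a=0$ you get $\Phi(0,r_0)=2r_0+\frac{r_0}{1-r_0}+\lambda r_0^2\approx 1.006$. So the inequality $\Phi\le 1$ you set out to prove is false at the target radius, and the argument cannot be completed as written.

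The missing ingredient is the $N=2$ case of Lemma \ref{L2}. There $t=0$, so the middle sum disappears:
\[
\sum_{n=2}^\infty |a_n|r^n+\Big(\frac1{1+a}+\frac r{1-r}\Big)\sum_{n=1}^\infty|a_n|^2r^{2n}\le (1-a^2)\frac{r^2}{1-r}.
\]
The paper uses exactly this (via Lemma \ref{L3} with $N=2$) in the proof of Theorem \ref{T3}, the Banach-space version of this statement; with $m=1$ its bound $I(r)$ is your $\Phi$ with this correction. This inequality is an equality for $f_a$. Replace $\frac{r(1+a)}{1-r}$ by $\frac{r^2(1+a)}{1-r}$ in $\Psi$. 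Then $(1+r)^2(1-r)\Psi(1,r)=(1+r^2)(1-3r-2r^2)$, so the radius $r_0$ comes out correctly. Your condition $\partial_a\Psi(1,r_0)=0$ then yields exactly $\lambda=\frac{221-43\sqrt{17}}{64}$, using $2r_0^2=1-3r_0$. All four estimates are now equalities for $f_a$ at $z=-r$, so your sharpness argument for both the radius and the constant becomes valid. With this correction your outline follows the same route as the paper. The paper likewise defers the final one-variable inequality $\Psi(a,r_0)\ge 0$ on $[0,1]$ to Ahamed--Roy, and your proposal also leaves that step unverified.
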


Theorem E provides a sharp refinement of the classical Bohr inequality involving derivative, coefficient-square and area contributions. Since many classical Bohr-type phenomena admit natural extensions to holomorphic mappings on Banach spaces, it is natural to ask whether Theorem E remains valid in this more general setting.

\medskip

\noindent
\begin{que2}
Can Theorem E be extended to holomorphic mappings on complex Banach spaces? If so, what are the corresponding sharp radius and the best possible constant $\lambda$?
\end{que2}
\medskip

The main purpose of this paper is to answer Questions 1.1 and 1.2 in the affirmative by establishing sharp Bohr-type inequalities for holomorphic mappings associated with Schwarz functions in Banach spaces.

\section{{\bf Auxiliary Lemmas.}} 
The following are key lemma of this paper and will be used to prove the main results.
\begin{lem}\label{L1} \cite{CHPV}
Suppose that $B_{X}$ and $B_{Y}$ are the unit balls of the complex Banach spaces $X$ and $Y$, respectively. Let $f: B_{X} \to B_{Y}$ be a holomorphic mapping. Then
\begin{equation*}
\|f(z)\|_{Y} \le \frac{\|f(0)\|_{Y} + \|z\|_{X}}{1 + \|f(0)\|_{Y}\|z\|_{X}}, \qquad z \in B_{X}.
\end{equation*}
This estimate is sharp with equality possible for each value of $\|f(0)\|_{Y}$ and for each $z \in B_{X}$.
\end{lem}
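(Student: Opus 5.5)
The plan is to reduce the statement to the classical one-variable Schwarz--Pick lemma by composing $f$ with suitable disk maps and a complex line through $z$. The case $z=0$ is trivial, so fix $z\in B_X\setminus\{0\}$ and write $a=f(0)\in B_Y$. If $a\neq 0$, pick $\ell\in T(a)$, which exists by the Hahn--Banach theorem as recalled earlier. Since $\|\ell\|=1$, we have $|\ell(y)|<1$ for $y\in B_Y$.

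Let $u=z/\|z\|_X$ and define
\[
g(\lambda)=\ell\bigl(f(\lambda u)\bigr),\qquad \lambda\in\mathbb{U}.
\]
Then $g$ is holomorphic from $\mathbb{U}$ into $\mathbb{U}$, with $g(0)=\|a\|_Y$ real and nonnegative. The classical Schwarz--Pick estimate $|g(\lambda)|\le (|g(0)|+|\lambda|)/(1+|g(0)||\lambda|)$ then bounds $|\ell(f(z))|$. Here one uses that the right-hand side is increasing in $|g(0)|$, via the function $t\mapsto (t+s)/(1+ts)$.

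\textbf{Main obstacle.} This bounds only the functional $\ell(f(z))$, not $\|f(z)\|_Y$ itself. To handle the full norm, I would instead take, for the target point, a functional $m\in T(f(z))$ and set $h(\lambda)=m(f(\lambda u))$. This is again a holomorphic self-map of the disk, and $|h(0)|=|m(a)|\le\|a\|_Y$. Schwarz--Pick combined with the monotonicity of $t\mapsto (t+s)/(1+ts)$ in $t\in[0,1)$ gives
\[
\|f(z)\|_Y=h(\|z\|_X)\le \frac{|h(0)|+\|z\|_X}{1+|h(0)|\,\|z\|_X}\le\frac{\|a\|_Y+\|z\|_X}{1+\|a\|_Y\|z\|_X}.
\]
This removes the need for the first functional entirely; the only point to check is the monotonicity, which follows because the derivative in $t$ equals $(1-s^2)/(1+ts)^2>0$.

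\textbf{Sharpness.} Take $X$ arbitrary and $Y$ containing a unit vector $e$. Given $c\in[0,1)$ and $z_0\neq 0$, choose $\phi\in T(z_0)$ and set
\[
f(x)=\frac{c+\phi(x)}{1+c\,\phi(x)}\,e .
\]
This map sends $B_X$ into $B_Y$ because $|\phi(x)|<1$ there. It satisfies $\|f(0)\|_Y=c$, and at $z_0$ we have $\phi(z_0)=\|z_0\|_X$, so equality holds at $z_0$.
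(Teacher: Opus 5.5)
The paper gives no proof of this lemma (it is quoted from the literature). Your argument is correct and is the standard one: you restrict $m\circ f$, with $m$ a norm-one support functional on $Y$ at $f(z)$, to the complex line through $z$, then apply the one-variable Schwarz--Pick bound together with the monotonicity of $t\mapsto (t+s)/(1+ts)$, and you obtain sharpness from maps of the form $f(l_{z_0}(z))$, exactly like the extremal functions the paper uses in its own sharpness constructions. The only tidying needed is to dispose of the trivial case $f(z)=0$ (since $T(\cdot)$ is defined only at nonzero vectors), to read $T(f(z))$ as a subset of $L(Y,\mathbb{C})$ rather than $L(X,\mathbb{C})$, and to drop the unused opening paragraph with $\ell\in T(f(0))$.
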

\begin{lem}\label{L20}\cite{DP2008}, \cite[Theorem 2]{R1985} Suppose that $f$ is an analytic self-maps of the unit disk $\mathbb{U}$.  Then for all $k=1, 2, 3,....$ we have 
\[|f^{(k)}(z)|\leq \frac{k!(1-|f(z)|^2)}{(1-|z|^2)^k}(1+|z|)^{k-1},\;|z|<1.\]
\end{lem}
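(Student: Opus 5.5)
Since the estimate is local at a fixed point $z\in\mathbb{U}$, I would reduce it to the case $z=0$ by composing with disk automorphisms, and then use the classical coefficient bounds for self-maps of $\mathbb{U}$ together with a short computation of the derivatives of an automorphism.

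\emph{Step 1 (reduction).} Fix $z\in\mathbb{U}$, put $w=f(z)$, and introduce the disk automorphisms
\[
\varphi_z(\zeta)=\frac{z+\zeta}{1+\bar z\zeta},\qquad
\psi_w(\eta)=\frac{\eta-w}{1-\bar w\eta}.
\]
The composite $g=\psi_w\circ f\circ\varphi_z$ is an analytic self-map of $\mathbb{U}$ with $g(0)=0$. From $f=\psi_w^{-1}\circ g\circ\varphi_z^{-1}$ we have
\[
f(\xi)=\frac{g(\sigma(\xi))+w}{1+\bar w\, g(\sigma(\xi))},\qquad \sigma(\xi)=\frac{\xi-z}{1-\bar z\xi}.
\]
Expand this in powers of $(\xi-z)$. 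A cleaner alternative is Cauchy's formula on a circle: choose $\rho\in(0,1-|z|)$, apply the Cauchy estimate to the function $f-w$ on the disk $|\xi-z|<\rho$, and bound $|f-w|$ there using the Schwarz--Pick inequality. This route is the one I would actually follow.

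\emph{Step 2 (Cauchy estimate).} By Schwarz--Pick, for $|\xi-z|\le\rho$ we have
\[
\left|\frac{f(\xi)-w}{1-\bar w f(\xi)}\right|\le\left|\frac{\xi-z}{1-\bar z\xi}\right|\le\frac{\rho}{1-|z|^2-|z|\rho}=:t.
\]
Writing $u=f(\xi)$ and $s=\frac{u-w}{1-\bar w u}$, with $|s|\le t$, we get
\[
|u-w|=\frac{|s|(1-|w|^2)}{|1+\bar w s|}\le\frac{t(1-|w|^2)}{1-|w|t}.
\]
This bound is too weak near $|w|=1$. Instead I would use the sharper subordination fact that $f-w$ is majorised on $|\xi-z|<\rho$ by the image disk of radius $t(1-|w|^2)/(1-|w|^2t^2)$ centred at $w(1-t^2)/(1-|w|^2t^2)$. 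Then I apply the Cauchy coefficient estimate for a function whose values lie in a disk, namely $|c_k|\le \text{radius}/\rho^k$, to the Taylor coefficients $c_k=f^{(k)}(z)/k!$. This gives
\[
\frac{|f^{(k)}(z)|}{k!}\le\frac{(1-|w|^2)\,t}{(1-|w|^2t^2)\rho^k}\le\frac{(1-|w|^2)}{\rho^{k-1}(1-|z|^2-|z|\rho)}.
\]

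\emph{Step 3 (optimisation).} Minimise the resulting expression over $\rho$. The function $\rho^{k-1}(1-|z|^2-|z|\rho)$ attains its maximum at
\[
\rho=\frac{(k-1)(1-|z|^2)}{k|z|},
\]
truncated to stay below $1-|z|$. Comparing with the target bound $(1+|z|)^{k-1}/(1-|z|^2)^k$ then reduces to an elementary inequality in $r=|z|$. That comparison is exactly where I expect the main obstacle: the Cauchy-estimate route may lose constants. The truly sharp statement is Ruscheweyh's theorem, whose proof proceeds by the reduction of Step 1 together with the coefficient inequality
\[
|b_n|\le 1-|b_0|^2
\]
for self-maps, after expanding the composition with $\sigma$ and tracking the factors $(1-|z|^2)$ and $(1+|z|)^{k-1}$ via the expansion of $\sigma^j$ about $z$. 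If the Cauchy approach fails to close, I would fall back on that expansion, bounding the $k$-th Taylor coefficient of $\sigma(\xi)^j$ about $z$ by $\binom{k-1}{j-1}|z|^{k-j}/(1-|z|^2)^k$ and summing over $j$ to obtain $(1+|z|)^{k-1}$.
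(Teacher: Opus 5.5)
The paper gives no proof of this lemma; it quotes it from Ruscheweyh and Dai--Pan. The standard argument there is essentially your fallback. The route you say you would actually follow, Cauchy's estimate on $|\xi-z|=\rho$, cannot reach the stated inequality.

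\textbf{The Cauchy route fails.} First, the last inequality in Step 2 points the wrong way. Since $t=\rho/(1-|z|^2-|z|\rho)$, it amounts to $(1-|w|^2t^2)^{-1}\le 1$, which is false for $w\neq 0$.

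Second, for $k\ge2$ your critical point $\rho=\frac{(k-1)(1-|z|^2)}{k|z|}$ is never below $1-|z|$, because this is equivalent to $k-1\ge |z|$. So the optimisation of your expression ends at $\rho\to 1-|z|$, where $t\to1$. There your (incorrect) bound equals $(1-|w|^2)(1-|z|)^{-k}$, which exceeds the target by the factor $1+|z|$. The correct Step 2 bound tends to $(1-|z|)^{-k}$ and loses the factor $1-|w|^2$ altogether.

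More fundamentally, at $z=0$ the lemma is exactly the coefficient inequality $|a_k|\le 1-|a_0|^2$. The Cauchy route only yields
\[
|a_k|\le \frac{1-|w|^2}{\sup_{0<\rho<1}\rho^{k-1}(1-|w|^2\rho^2)},
\]
and the denominator is $<1$ for $k\ge 2$ and $w\neq0$. So no choice of $\rho$ closes the argument. The sharp constant must come from the coefficient inequality as an input, not from Cauchy's estimate.

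\textbf{The fallback works once you change the reduction.} The combinatorics in your fallback are right, but the expansion must be applied to a different function than the one in Step 1. If you post-compose with $\psi_w$, then $b_0=0$ and the coefficient inequality only gives $|b_n|\le 1$. Moreover $f=\psi_w^{-1}\circ g\circ\sigma$ is not linear in the $b_n$, so summing coefficient bounds cannot produce the factor $1-|f(z)|^2$.

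Instead take $g=f\circ\varphi_z=\sum_{n\ge0}b_n\zeta^n$. Then $b_0=f(z)$, $|b_n|\le 1-|f(z)|^2$ for $n\ge1$, and $f(\xi)=\sum_{n}b_n\sigma(\xi)^n$ is linear in the $b_n$. Writing $\xi=z+h$ gives $\sigma=\frac{h}{1-|z|^2}\bigl(1-\frac{\bar z h}{1-|z|^2}\bigr)^{-1}$. Hence the coefficient of $h^k$ in $\sigma^j$ is exactly $\binom{k-1}{j-1}\bar z^{\,k-j}(1-|z|^2)^{-k}$ for $1\le j\le k$, and $0$ for $j>k$. This yields the identity
\[
\frac{f^{(k)}(z)}{k!}=\frac{1}{(1-|z|^2)^k}\sum_{j=1}^{k}\binom{k-1}{j-1}\bar z^{\,k-j}\,b_j .
\]
The triangle inequality together with $\sum_{j=1}^{k}\binom{k-1}{j-1}|z|^{k-j}=(1+|z|)^{k-1}$ then gives the lemma. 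This is Ruscheweyh's proof. With this correction, and with the Cauchy-estimate part dropped, your proposal is complete.
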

\begin{lem}\label{L2}\cite{LLP2021}
Suppose that $f$ is an analytic self-maps of the unit disk $\mathbb{U}$. Then for any $N\in \mathbb{N}$, the  following inequality holds
\[\sum_{n=N}^{\infty} |a_n|\, r^{n}+\text{sgn}(t)\sum_{n=1}^t|a_n|^2\frac{r^N}{1-r}+ \left( \frac{1}{1+|a_0|}+ \frac{r}{1-r} \right)\sum_{n=t+1}^{\infty} |a_n|^{2} r^{2n}\le (1-|a_0|^{2})\,\frac{r^N}{1-r},\]
for $r\in [0,1)$, where $t=\lfloor \frac{N-1}{2}\rfloor$.
\end{lem}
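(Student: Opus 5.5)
The plan is to reduce the inequality to two classical coefficient estimates for self-maps of $\mathbb{U}$ and then do an elementary bookkeeping in powers of $r$. Write $a=|a_0|$. The two ingredients are:
\begin{enumerate}[(i)]
\item the Wiener-type bound $|a_n|\le 1-a^2$ for all $n\ge 1$;
\item the Parseval/Schur bound $\sum_{n\ge1}|a_n|^2\le 1-a^2$, together with its sharper weighted forms.
\end{enumerate}
To get (i), average over the $n$-th roots of unity: $h(z)=\frac1n\sum_{j=0}^{n-1}f(e^{2\pi i j/n}z)=\varphi(z^n)$ with $\varphi\in\mathcal B$ and $\varphi(w)=a_0+a_nw+a_{2n}w^2+\cdots$. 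Then apply Schwarz--Pick to $\varphi$ at the origin. Applying the classical refined inequality of Ponnusamy--Vijayawickrema--Wirths to $\varphi$ (the case ``$N=1$'') gives more: for $\rho\in[0,1)$,
\[
\sum_{k\ge1}|a_{kn}|\rho^k+\Big(\frac1{1+a}+\frac{\rho}{1-\rho}\Big)\sum_{k\ge1}|a_{kn}|^2\rho^{2k}\le (1-a^2)\frac{\rho}{1-\rho}.
\]
I will take this base case as available, or reprove it by the standard argument. That argument writes $f=(a_0+z\omega)/(1+\overline{a_0}z\omega)$, compares coefficients, and uses $\sum|c_n|^2\le1$ for the Schur parameter.

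Next I will expand the right-hand side as $(1-a^2)\frac{r^N}{1-r}=\sum_{m\ge N}(1-a^2)r^m$. The goal is to show that the left-hand side, also expanded in powers of $r$, is dominated coefficientwise in the appropriate averaged sense. The linear part $\sum_{n\ge N}|a_n|r^n$ uses $|a_n|$ in slot $r^n$. For $n\le t$, where $2n<N$, the square $|a_n|^2$ is charged a full $\frac{r^N}{1-r}$, i.e.\ it occupies every slot $r^m$ with $m\ge N$. For $n\ge t+1$, where $2n\ge N$, the term $|a_n|^2\big(\frac{1}{1+a}+\frac{r}{1-r}\big)r^{2n}$ occupies slot $r^{2n}$ with weight $\frac1{1+a}$ and every slot $r^m$ with $m>2n$ with weight $1$.

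Collecting, the required inequality in slot $r^m$ with $m\ge N$ reads
\[
|a_m|+\sum_{1\le n,\ 2n<m}|a_n|^2+\frac{1}{1+a}|a_{m/2}|^2\le 1-a^2,
\]
where the last term is present only for even $m$. So the heart of the proof is this coefficient inequality for $f\in\mathcal B$ and each $m\ge1$. I would prove it from the Schur parametrization $f=(a_0+z\omega)/(1+\overline{a_0}z\omega)$, $\omega\in\mathcal B$. Expanding to order $m$, one has $a_m(1+\ldots)$ expressed through $\omega$'s coefficients, and a Carlson/Ruscheweyh-type extremal argument applies. Concretely, I would consider the Toeplitz (Schur) positivity condition: the lower triangular matrix of $(a_0,\dots,a_m)$ has operator norm at most $1$. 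Testing it against a vector that pairs $a_m$ with the ``half'' coefficients $a_1,\dots,a_{\lfloor m/2\rfloor}$ yields exactly $|a_m|\le 1-a^2-\sum_{2n<m}|a_n|^2-\frac{|a_{m/2}|^2}{1+a}$. This is the known sharp refinement; for $m=2$ it is $|a_2|\le 1-a^2-|a_1|^2/(1+a)$.

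Finally I would check that summing slot inequalities is legitimate: all terms are nonnegative and the slot weights match the expansion exactly. In particular, the $\operatorname{sgn}(t)$ term uses weight $1$ in every slot $m\ge N$ because $2n<N\le m$ for $n\le t$. This gives the lemma for all $r\in[0,1)$. The main obstacle is the slot inequality above, specifically extracting the precise constant $\frac1{1+a}$ on the middle coefficient from the Schur positivity condition. If the direct matrix test fails to produce it, I would fall back on applying the $N=1$ refined inequality to suitable auxiliary functions built from $f$ (rotational averages and the Schur-parameter transform) and match coefficients.
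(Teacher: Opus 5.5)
Your proposal is correct and follows essentially the standard proof of this lemma, which the paper does not prove but cites from \cite{LLP2021}: the slot-by-slot comparison of the coefficients of $r^m$, $m\ge N$, reduces it exactly to Carlson's classical bounds $|a_{2n+1}|\le 1-\sum_{k=0}^{n}|a_k|^2$ and $|a_{2n}|\le 1-\sum_{k=0}^{n-1}|a_k|^2-\frac{|a_n|^2}{1+|a_0|}$, which are then summed as you describe (so the roots-of-unity averaging and the $N=1$ base case are not needed). Your Toeplitz test does produce the constant $\frac{1}{1+|a_0|}$: after replacing $f$ by $e^{i\phi}f$ so that $a_m=-|a_m|$, apply $\bigl|\sum_{0\le j\le i\le m}\overline{y_i}\,a_{i-j}x_j\bigr|\le\|x\|\,\|y\|$ with $x(z)=1-\sum_{j=0}^{\lfloor m/2\rfloor}w_j\overline{a_j}\,z^{m-j}$ and $y(z)=-z^m+\sum_{j=0}^{\lfloor m/2\rfloor}w_ja_jz^j$, where $w_j=1$ except $w_{m/2}=\frac{1}{1+|a_0|}$ for even $m$ (so $a_0$ must be included, not only $a_1,\dots,a_{\lfloor m/2\rfloor}$); the bilinear form then equals $|a_m|+2\sum_j w_j|a_j|^2$ plus, for even $m$ only, $-w_{m/2}^2a_0\overline{a_{m/2}}^{\,2}$, and comparing its real part with $\|x\|\,\|y\|=1+\sum_j w_j^2|a_j|^2$ gives exactly your slot inequality.
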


\begin{lem}\label{L3}
Let $B_{X}$ be the unit ball of a complex Banach space $X$ also let $F$ be a holomorphic mapping from $B_{X}$ to $\ol{\mathbb{U}}$ with
\[
F(z)=a+\sum_{s=1}^{\infty}P_{s}(z), \qquad z\in B_{X},
\]
where $P_{s}(z)=\frac{1}{s!}D^{s}F(0)(z^{s})$ and   $\mu_k:B_{X}\to B_{X}$ are  Schwarz mappings having $z=0$ as a zero of order $k$. Then, for each $k\in\mathbb{N}$, the inequality
\beas\label{eq:lemma3}
&&\sum_{j=N}^{\infty} |P_j(\mu_k(z))| +\text{sgn}(t)\sum_{n=1}^t\frac{|P_n(\mu_k(z))|^2r^{k(N-2n)}}{1-r^k}
+\left(\frac{1}{1+|a|}+\frac{r^k}{1-r^k}\right)
\sum_{j=t+1}^{\infty} |P_j(\mu_k(z))|^2\nonumber\\
&\le& (1-|a|^2)\frac{r^{Nk}}{1-r^k}
\eeas
holds for $\|z\|=r\in[0,1)$, where $t=\lfloor \frac{N-1}{2}\rfloor$.
\end{lem}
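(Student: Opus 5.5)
Fix $z\in B_X$ with $\|z\|=r\in(0,1)$; the case $z=0$ is trivial since then $\mu_k(z)=0$. The plan is to reduce the statement to Lemma \ref{L2} by restricting $F$ to a complex line. Put $w:=\mu_k(z)$. If $w=0$ every term on the left vanishes and there is nothing to prove, so assume $w\neq 0$ and set $\rho:=\|w\|$. By the Schwarz estimate \eqref{eq:Schwarz}, $\rho\le r^k<1$.

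Next define the one-variable function
\[
f(\zeta):=F\Bigl(\zeta\,\frac{w}{\rho}\Bigr),\qquad \zeta\in\mathbb{U}.
\]
It is holomorphic on $\mathbb{U}$ with values in $\overline{\mathbb{U}}$. By homogeneity of the $P_s$,
\[
f(\zeta)=a+\sum_{s\ge1}P_s(w/\rho)\,\zeta^s ,
\]
so its Taylor coefficients are $a_0=a$ and $a_s=P_s(w/\rho)$. Evaluating at $\zeta=\rho$ gives
\[
|a_s|\rho^s=|P_s(w)|,\qquad |a_s|^2\rho^{2s}=|P_s(w)|^2 .
\]
If $|a|=1$, the maximum principle makes $f$ constant, all $a_s=0$, and the inequality holds trivially. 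Otherwise $f$ maps $\mathbb{U}$ into $\mathbb{U}$ and Lemma \ref{L2} applies.

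Apply Lemma \ref{L2} at the radius $\rho$. After substituting the identities above, this gives
\[
\sum_{j\ge N}|P_j(w)|+\mathrm{sgn}(t)\sum_{n=1}^t|P_n(w)|^2\frac{\rho^{N-2n}}{1-\rho}+\Bigl(\frac1{1+|a|}+\frac{\rho}{1-\rho}\Bigr)\sum_{j>t}|P_j(w)|^2\le(1-|a|^2)\frac{\rho^N}{1-\rho}.
\]
Here $|P_n(w)|^2=|a_n|^2\rho^{2n}$, so $|a_n|^2\rho^N/(1-\rho)$ equals $|P_n(w)|^2\rho^{N-2n}/(1-\rho)$.

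The main obstacle is passing from $\rho$ to $r^k$: the claimed inequality has $r^k$ in place of $\rho$ everywhere, but Lemma \ref{L2} only gives the bound at the radius $\rho$. I would not replace $\rho$ by $r^k$ inside the left side termwise. Instead, I would rerun the argument applied to the same $f$ at radius $R:=r^k\ge\rho$ and express everything through the original coefficients $|a_s|$. The left side of the claim is
\[
\sum_{j\ge N}|a_j|\rho^j+\mathrm{sgn}(t)\sum_{n\le t}|a_n|^2\rho^{2n}\frac{R^{N-2n}}{1-R}+\Bigl(\frac1{1+|a|}+\frac{R}{1-R}\Bigr)\sum_{j>t}|a_j|^2\rho^{2j}.
\]
Since $\rho\le R$ and all exponents are nonnegative (note $N-2n\ge1$ for $n\le t$), this is termwise at most
\[
\sum_{j\ge N}|a_j|R^j+\mathrm{sgn}(t)\sum_{n\le t}|a_n|^2\frac{R^{N}}{1-R}+\Bigl(\frac1{1+|a|}+\frac{R}{1-R}\Bigr)\sum_{j>t}|a_j|^2R^{2j}.
\]
This last expression is exactly the left side of Lemma \ref{L2} for $f$ at radius $R$. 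It is therefore bounded by $(1-|a|^2)R^{N}/(1-R)=(1-|a|^2)r^{Nk}/(1-r^k)$, which is the claim.

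It remains to check that the exponent bookkeeping in the middle sum is consistent, namely that $\rho^{2n}R^{N-2n}\le R^N$. This is immediate from $\rho\le R$.
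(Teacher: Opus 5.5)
Your proof is correct, and it takes a different route from the paper's. The difference matters. The paper restricts $F$ to the complex line through $z$ itself and applies Lemma~\ref{L2} at radius $\|z\|=r$. It then uses $r^k\le r$ with the monotonicity of $x/(1-x)$ and $x^{N-2n}/(1-x)$. From this it asserts that the left-hand side, with weights evaluated at $r^k$, is at most $(1-|a|^2)\|\mu_k(z)\|^N/(1-\|\mu_k(z)\|)$, and only afterwards invokes $\|\mu_k(z)\|\le r^k$.

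That intermediate bound would need the weights at $r^k$ to be no larger than those at $\|\mu_k(z)\|$. This is the wrong direction, and the bound genuinely fails. Take $X=\mathbb{C}$, $F(\zeta)=\zeta$, $N=3$ and $\mu_k(\zeta)=c\zeta^k$ with $0<|c|<1$. Then the left-hand side equals $|c|^2r^{3k}/(1-r^k)$, which exceeds $|c|^3r^{3k}/(1-|c|r^k)$ for every $r\in(0,1)$.

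You instead restrict along $\mu_k(z)/\|\mu_k(z)\|$ and freeze all weights at the larger radius $R=r^k$. You use the Schwarz estimate only on the coefficient side: $|a_j|\rho^j$, $|a_n|^2\rho^{2n}R^{N-2n}$ and $|a_j|^2\rho^{2j}$ are each dominated by their values at $\rho=R$. Lemma~\ref{L2} at radius $R$ then gives $(1-|a|^2)r^{Nk}/(1-r^k)$ directly. This is exactly where $\|\mu_k(z)\|\le r^k$ works in your favor, so your argument is a sound proof of the lemma as stated.

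Your preliminary application of Lemma~\ref{L2} at radius $\rho$ is never used and can be dropped.
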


\begin{proof}
Fix an arbitrary point $z\in B_X\setminus\{0\}$ and set $z_0=\frac{z}{\|z\|_X}\in \partial B_X.$

Define the auxiliary function
\[
f(\zeta)=F(\zeta z_0), \qquad \zeta\in \overline{\mathbb U}.
\]
Since $F$ is holomorphic on $B_X$, the function $f$ is holomorphic in $\mathbb U$ and admits the expansion
\[
f(\zeta)=a+\sum_{s=1}^{\infty}P_s(z_0)\zeta^s,
\qquad \zeta\in\mathbb U.
\]

By a classical coefficient estimate (see, for example, \cite[p.~35]{GK2003}), we have
\begin{equation}\label{l0}
|P_s(z_0)|\le 1-|a|^2,\qquad s\ge1.
\end{equation}
Furthermore, an application of Lemma~\ref{L2} yields
\begin{align}
&\sum_{s=N}^{\infty}|P_s(z_0)|\,|\zeta|^s+\text{sgn}(t)\sum_{n=1}^t|P_n(z_0)|^2\frac{|\zeta|^N}{1-|\zeta|}
+\left(\frac{1}{1+|a|}+\frac{|\zeta|}{1-|\zeta|}\right)
\sum_{s=t+1}^{\infty}|P_s(z_0)|^2|\zeta|^{2s}
\nonumber\\
&\le
(1-|a|^2)\frac{|\zeta|^N}{1-|\zeta|}.
\label{l1}
\end{align}

Taking $\zeta=\|z\|_X=r<1$, we obtain from \eqref{l1} that
\begin{align}
&\sum_{s=N}^{\infty}|P_s(z)|
+\text{sgn}(t)\sum_{n=1}^t|P_n(z)|^2\frac{\|z\|_X^{N-2n}}{1-\|z\|_X}
+\left(\frac{1}{1+|a|}
+\frac{\|z\|_X}{1-\|z\|_X}\right)
\sum_{s=t+1}^{\infty}|P_s(z)|^2
\nonumber\\
&\le
(1-|a|^2)\frac{\|z\|_X^N}{1-\|z\|_X}.
\label{hhh1}
\end{align}

Next, consider the function
\[
\phi_1(x)=\frac{x}{1-x}, \;\text{and}\;\phi_2(x)=\frac{x^{N-2n}}{1-x}, \;\text{for}\;N-2n\geq 0.
\]
Since
\[
\phi_1'(x)=\frac{1}{(1-x)^2}>0\;\text{and}\; \phi_2(x)=\frac{(N-2n)x^{N-2n-1}}{1-x}+\frac{x^{N-2n}}{(1-x)^2}\geq 0 \;\text{for}\;0\leq x<1.
\]
the function $\phi_i(i=1,2)$ are strictly increasing on $(0,1)$. As $r^k\le r$ for every $k\ge1$, it follows that
\[
\phi_i(r^k)\le \phi_i(r),\; \;i=1,2
\]
or equivalently,
\[
\frac{r^k}{1-r^k}
\le
\frac{r}{1-r}
=
\frac{\|z\|_X}{1-\|z\|_X}\;\text{and}\; \frac{r^{k(N-2n)}}{1-r^k}\leq \frac{r^{N-2n}}{1-r}=\frac{\|z\|_X^{N-2n}}{1-\|z\|_X}.
\]

Consequently,
\begin{align}
&\sum_{s=N}^{\infty}|P_s(z)|
+\text{sgn}(t)\sum_{n=1}^t|P_n(z)|^2\frac{r^{k(N-2n)}}{1-r^k}
+\left(\frac{1}{1+|a|}
+\frac{r^k}{1-r^k}\right)
\sum_{s=t+1}^{\infty}|P_s(z)|^2
\nonumber\\
&\le
\sum_{s=N}^{\infty}|P_s(z)|
+\text{sgn}(t)\sum_{n=1}^t|P_n(z)|^2\frac{\|z\|_X^{N-2n}}{1-\|z\|_X}
+\left(\frac{1}{1+|a|}
+\frac{\|z\|_X}{1-\|z\|_X}\right)
\sum_{s=t+1}^{\infty}|P_s(z)|^2.
\label{aq1}
\end{align}

Finally, combining \eqref{eq:Schwarz}, \eqref{hhh1}, and \eqref{aq1}, we deduce that
\begin{align*}
&\sum_{s=N}^{\infty}|P_s(\mu_k(z))|
+\text{sgn}(t)\sum_{n=1}^t|P_n(z)|^2\frac{r^{k(N-2n)}}{1-r^k}
+\left(\frac{1}{1+|a|}
+\frac{r^k}{1-r^k}\right)
\sum_{s=t+1}^{\infty}|P_s(\mu_k(z))|^2
\\
&\le
(1-|a|^2)
\frac{\|\mu_k(z)\|_X^N}
     {1-\|\mu_k(z)\|_X}
\le
(1-|a|^2)\frac{r^{Nk}}{1-r^k}.
\end{align*}
This completes the proof.
\end{proof}
\section{{\bf Main results and their proofs.}}
In the setting of holomorphic mappings in Banach spaces, the Bohr phenomenon exhibits several interesting features when combined with Schwarz functions. The following result provides a sharp refined version of the Bohr inequality involving several Schwarz functions and establishes the corresponding best possible Bohr radius.

Throughout this paper, for $p\in \{1,2\}$, we consider the following functionals, which play a fundamental role in our investigation of improved Bohr-type inequalities:

\bea\label{T1.1}\mathcal{J}_{f,p}(z,r):&=&|F(\mu_m(z))|^p+|D F(\mu_m(z))\mu_m(z)|+\frac{1}{2!}|D^2 F(\mu_m(z)) ((\mu_m(z))^2)|\nonumber\\
&&+\frac{1}{3!}|D^3 F(\mu_m(z)) ((\mu_m(z))^3)|+\sum_{j=4}^{\infty} |P_j(\mu_k(z))| +\frac{|P_1(\mu_k(z))|^2r^{2k}}{1-r^k}\nonumber\\
&&+\left(\frac{1}{1+|a|}+\frac{r^k}{1-r^k}\right)
\sum_{j=2}^{\infty} |P_j(\mu_k(z))|^2,
\eea

\begin{theo}\label{T1}
Let $B_{X}$ be the unit ball of a complex Banach space $X$ also let $F$ be a holomorphic mapping from $B_{X}$ to $\ol{\mathbb{U}}$ with
\[
F(z)=a+\sum_{s=1}^{\infty}P_{s}(z), \qquad z\in B_{X},
\]
where $P_{s}(z)=\frac{1}{s!}D^{s}F(0)(z^{s})$ and   $\mu_k,\mu_m:B_{X}\to B_{X}$ are  Schwarz mappings having $z=0$ as a zero of order $k,m$ respectively. Then,  for each $k,m\in\mathbb{N}$, the inequality
\[\mathcal{J}_{f,1}(z,r)\leq 1\]
for $\|z\|=r\leq R_{1,m,k}<R_{1,m}$, where $R_{1,m}\in(0,1)$ is the unique root of the equation 
\[\frac{r^m(1+r^{3m})}{(1-r^{2m})^2}-\frac{1-r^m}{2}=0\]
and $R_{1,m,k}\in(0,1)$ is the smallest positive root of the equation
\bea\label{rr1}
-(1-r^m)+\frac{2r^m(1+r^{3m})}{(1-r^{2m})^2}+\frac{2r^{4k}}{1-r^k}=0.
\eea

Moreover 
 \[\mathcal{J}_{f,2}(z,r)\leq 1\]
 for $\|z\|=r\leq R_{2,m,k}<R_{2,m}$, where $R_{2,m}\in(0,1)$ is the unique root of the equation 
 \[\frac{r^m(1+r^m)(1+r^{3m})}{(1-r^{2m})^2}-(1-r^{2m})=0\]
and $R_{2,m,k}\in(0,1)$ is the smallest positive root of the equation
\bea\label{rr2}-(1-r^{2m})+\frac{r^m(1+r^m)(1+r^{3m})}{(1-r^{2m})^2}+\frac{r^{4k}(1+r^m)^2}{1-r^k}=0.\eea
The equality $\lim_{a\to 1^-} \mathcal{J}_{F,p}(z,r)=1, p=1,2$ occurs for the function  $F$ defined in (\ref{F1}).
\end{theo}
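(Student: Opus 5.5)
Fix $z\in B_X\setminus\{0\}$ with $\|z\|=r$ and write $w=\mu_m(z)$, so that $\|w\|\le r^m$ by \eqref{eq:Schwarz}. If $w\neq 0$, let $w_0=w/\|w\|$ and consider the slice function $g(\zeta)=F(\zeta w_0)$, which is a holomorphic self-map of $\ol{\mathbb U}$ with $g(0)=a$ and $g(\|w\|)=F(w)$. For $j=1,2,3$ we have $D^jF(w)(w^j)=\|w\|^j g^{(j)}(\|w\|)$. Applying Lemma \ref{L20} with $\rho=\|w\|$ gives
\[
\frac{1}{j!}|D^jF(w)(w^j)|\le \frac{(1-|F(w)|^2)\rho^j(1+\rho)^{j-1}}{(1-\rho^2)^j}
=(1-|F(w)|^2)\frac{\rho^j}{(1-\rho)^{j}(1+\rho)}.
\]
Summing over $j=1,2,3$ and simplifying, the derivative block is bounded by $(1-|F(w)|^2)\,\Phi(\rho)$, where $\Phi(\rho)=\rho(1+\rho^3)/(1-\rho^2)^2$ up to routine algebra. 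The statement indicates that this is the intended closed form, and I would verify it directly. Since $\Phi$ is increasing, $\rho\le r^m$ gives the factor $\Phi(r^m)=r^m(1+r^{3m})/(1-r^{2m})^2$.

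For the tail terms with $\mu_k$, I would apply Lemma \ref{L3} with $N=4$. Then $t=1$, and the middle term is exactly $|P_1(\mu_k(z))|^2 r^{2k}/(1-r^k)$. This bounds the remaining part of $\mathcal J_{f,p}$ by $(1-|a|^2)r^{4k}/(1-r^k)$.

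Next I combine the two pieces. Set $y=|F(w)|$. By Lemma \ref{L1}, $y\le (|a|+r^m)/(1+|a|r^m)$. For $p=1$ we get
\[
\mathcal J_{f,1}\le y+(1-y^2)\Phi(r^m)+(1-|a|^2)\frac{r^{4k}}{1-r^k}.
\]
The function $y\mapsto y+(1-y^2)\Phi$ is increasing on $[0,1]$ when $\Phi\le 1/2$. This is where $R_{1,m}$ enters: $\Phi(r^m)\le (1-r^m)/2$ ensures monotonicity, and indeed something slightly stronger. Substituting the upper bound for $y$ turns the expression into a function of $|a|=\alpha$:
\[
\frac{\alpha+r^m}{1+\alpha r^m}+\frac{(1-\alpha^2)(1-r^{2m})}{(1+\alpha r^m)^2}\Phi+(1-\alpha^2)\frac{r^{4k}}{1-r^k}.
\]
Subtracting $1$ and factoring out $(1-\alpha)$ gives
\[
(1-\alpha)\left[-\frac{1-r^m}{1+\alpha r^m}+\frac{(1+\alpha)(1-r^{2m})\Phi}{(1+\alpha r^m)^2}+(1+\alpha)\frac{r^{4k}}{1-r^k}\right].
\]
I would show that the bracket is increasing in $\alpha$, so that its worst case is $\alpha\to1$. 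At $\alpha=1$, after multiplying by $1+r^m$, the bracket becomes $-(1-r^m)+2\Phi(r^m)+\dots$, which is exactly \eqref{rr1}. For $p=2$ the same scheme applies with $y^2+(1-y^2)\Phi$, which is increasing for every $\Phi<1$. At $\alpha=1$ the factor $(1+r^m)^2$ appears, giving \eqref{rr2}. The conditions $R_{p,m,k}<R_{p,m}$ follow because the added $k$-terms are positive.

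I expect the main obstacle to be the monotonicity in $\alpha$ of the bracket. The first term increases in $\alpha$ and the third clearly increases. The middle term $(1+\alpha)/(1+\alpha r^m)^2$ has derivative with sign $1-r^m-2\alpha r^m$, so it is not monotone. I would first try to bound the sum of the first two terms together, or check that its derivative is dominated by $r^m(1-r^m)/(1+\alpha r^m)^2$ from the first term, using $r\le R_{p,m}$.

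For sharpness, I would take the extremal function $F(z)=\frac{a-\ell(z)}{1-a\ell(z)}$ with $\ell\in T(x_0)$ composed along a fixed direction, together with $\mu_m(z)=\ell(z)^m x_0$ and $\mu_k$ defined analogously. Evaluating at $z=-rx_0$ (or a rotation of it) and letting $a\to1^-$, as in Theorem D, would show the radii cannot be enlarged.
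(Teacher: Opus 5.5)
Your plan follows the paper step for step: slice function, Lemma~\ref{L20} with monotonicity in $\|\mu_m(z)\|\le r^m$, Lemma~\ref{L1}, Lemma~\ref{L3} with $N=4$, monotonicity of $y\mapsto y+\lambda(1-y^2)$, then $\alpha=|a|\to1$. For $p=2$ it goes through and gives exactly \eqref{rr2}. For $p=1$ the final identification is false, and the discrepancy is hidden in your ``$\dots$''.

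First, the three derivative coefficients sum to $\Phi(\rho)=\rho(1+\rho)(1+\rho^3)/(1-\rho^2)^3$, not $\rho(1+\rho^3)/(1-\rho^2)^2$. With the correct $\Phi$, the condition $\Phi(r^m)\le\frac12$ is exactly the $R_{1,m}$ equation. Your correctly factored bracket at $\alpha=1$, multiplied by $1+r^m$, then equals
\[
-(1-r^m)+\frac{2r^m(1+r^{3m})}{(1-r^{2m})^2}+\frac{2(1+r^m)\,r^{4k}}{1-r^k}.
\]
The reason is that the tail bound $(1-\alpha^2)r^{4k}/(1-r^k)$ is not divided by $1+\alpha r^m$ as the other two terms are. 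Compared with \eqref{rr1}, the $k$-term carries an extra factor $1+r^m$, so this equation has a strictly smaller root than $R_{1,m,k}$. For $m=k=1$, multiplying by $(1-r^2)^2$ gives $(1+r)(-1+4r-2r^2+3r^4+2r^5-2r^6-2r^7)=0$. That is Theorem D's equation, with root $0.285086$, whereas \eqref{rr1} gives $0.2862$ (Table~\ref{tab3.1}). Between the two roots your upper bound exceeds $1$ for $\alpha$ near $1$, so the argument does not reach $R_{1,m,k}$.

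No repair is possible, because the $p=1$ claim is false in general. Take $X=\mathbb C$, $m=4$, $k=1$, $\mu_4(z)=z^4$, $\mu_1(z)=z$, $F(z)=(a+z)/(1+az)$ and $z=r=0.59$. The left side of \eqref{rr1} is about $-0.038$ there, so $r<R_{1,4,1}\approx0.5946$. Yet $\mathcal J_{F,1}=1+c\,(1-a)+O((1-a)^2)$ with
\[
c=-\frac{1-\rho}{1+\rho}+\frac{2\rho}{(1+\rho)^2}+\frac{2\rho^2}{(1+\rho)^3}+\frac{2\rho^3}{(1+\rho)^4}+\frac{2r^{4k}}{1-r^k}\approx0.023>0,\qquad \rho=r^m;
\]
for instance, $\mathcal J_{F,1}\approx1.0002$ at $a=0.99$.

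The paper reaches \eqref{rr1} only through an algebra slip. Writing its bound as $1+(1-x)G_1(x,r)/(1+xr^m)$ requires the last term of $G_1$ to be $(1+x)(1+xr^m)\,r^{4k}/(1-r^k)$, but it writes $(1+x)\,r^{4k}/(1-r^k)$. Factoring out $1/(1+\alpha r^m)$ this way also removes your monotonicity worry: the remaining expression is increasing in $\alpha$, because $(1+\alpha)/(1+\alpha r^m)$ is. So your method proves the $p=1$ inequality only for $r$ up to the root of the displayed corrected equation.

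Finally, the automorphism family does not show that the radii cannot be enlarged. It attains the bound of Lemma~\ref{L20} only for the first derivative. For $m=k=1$, its first-order term in $1-a$ at $R_{p,1,1}$ is strictly negative for both $p$. The statement's last sentence asserts only the trivial limit $\mathcal J_{F,p}\to1$.
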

\begin{rem}
Theorem \ref{T1} may be viewed as a Banach-space-valued extension and refinement of Theorem D\cite[Theorem 2.2]{AR2025}. Indeed, when $X=\mathbb{C}$, $m=k=1$, and the Schwarz mappings are chosen as $\mu_1(z)=\mu_2(z)=z,$ the holomorphic mapping $F(z)=a+\sum_{s=1}^{\infty}P_s(z)$ reduces to an analytic function $f(z)=\sum_{n=0}^{\infty}a_n z^n$ on the unit disk. In this setting, the functionals $\mathcal{J}_{F,p}$ become natural analogues of the functionals $\mathcal{D}_{f,p}$ considered in \cite{AR2025}. Consequently, Theorem \ref{T1} extends the corresponding Bohr-type inequalities from scalar-valued analytic functions on $\mathbb{D}$ to holomorphic mappings defined on the unit ball of an arbitrary complex Banach space.\par

Furthermore, the presence of the parameters $m$ and $k$, arising from the Schwarz mappings $\mu_m$ and $\mu_k$, provides additional flexibility and yields a broader family of Bohr-type radii. Thus, Theorem \ref{T1} not only generalizes the result of \cite{AR2025} but also establishes a unified framework encompassing both the classical one-variable case and the Banach-space setting.
\end{rem}

Table \ref{tab3.1} and Figures \ref{f31}--\ref{f32} present the numerical values and graphical locations of the radii $R_{1,m,k}$ and $R_{2,m,k}$. The values listed in Table \ref{tab3.1} correspond to the smallest roots of equations \eqref{rr1} and \eqref{rr2}, respectively, in the interval $(0,1)$. Figures \ref{f31} and \ref{f32} illustrate the locations of these roots for selected values of $m$ and $k$, confirming the numerical results.
 
\begin{table}[H]
\centering
\begin{minipage}[b]{0.45\textwidth}
\centering
\begin{tabular}{|c|c|c|c|}
\hline
\textbf{k} & \textbf{m} &\textbf{$R_{1,m,k}$} & \textbf{$R_{2,m,k}$} \\ 
\hline
1 & 1 & 0.286200& 0.386050 \\ 
\hline
2 & 2 & 0.534970& 0.621330 \\ 
\hline
2 & 3 & 0.644090 & 0.706450\\ 
\hline
3 & 4 & 0.724270& 0.777320 \\ 
\hline
\end{tabular}
\caption{Numerical values of $R_{1,m,k}$ and $R_{2,m,k}$ for selected values of $m$ and $k$.}
\label{tab3.1}
\end{minipage}%
\hfill
\begin{minipage}[b]{0.55\textwidth}
\centering
\begin{figure}[H]
\centering
\includegraphics[width=\textwidth]{P1.pdf}
\caption{The graphs exhibit the locations of the roots $R_{1,m,k}$ in $(0,1)$ for different values of $k,m$.}
\label{f31}
\end{figure}
\end{minipage}
\begin{minipage}[b]{0.55\textwidth}
\centering
\begin{figure}[H]
\centering
\includegraphics[width=\textwidth]{P2.pdf}
\caption{The graphs exhibit the locations of the roots $R_{2,m,k}$ in $(0,1)$ for different value of $m$.}
\label{f32}
\end{figure}
\end{minipage}

\end{table}

\begin{proof}[{\bf Proof of Theorem \ref{T1.1}}]
Let $z\in B_X\setminus\{0\}$ be fixed and define $z_0=\frac{z}{\|z\|_X}\in \partial B_X.$
We consider the holomorphic function $f: \mathbb{U} \to \overline{\mathbb{U}}$ defined by
\[
f(\zeta) = F(\zeta z_0), \qquad \zeta \in \mathbb{U}.
\]
 Then 
\begin{equation}\label{eq:fexp}
f(\zeta)=a_j+\sum_{s=1}^{\infty}(P_s)(z_0)\zeta^s,
\qquad \zeta\in\mathbb{U}.
\end{equation}
Hence, we get (see, e.g., \cite[p.~35]{GK2003}),
$|(P_s)(z_0)|\le 1-|a|^2$.

Differentiating $k-$ times  $f(\zeta)= F(\zeta z_0)$, we have 
\[ f^{(k)}(\zeta)=D^k F(\zeta z_0) (z_0^k).\]
Then by Schwarz-Pick Lemma, we can conclude that 
\[ |D^kF(\zeta z_0)(z_0)^k|\leq \frac{k!(1-|f(\zeta)|^2)}{(1-|\zeta|^2)^k}(1+|\zeta|)^{k-1}.\]

Then by Schwarz-Pick Lemma, we can deduce that
\[ |D^kF(\zeta z_0)(z_0^k)|=|f^{(k)}(\zeta)|\leq \frac{k!(1-|f(\zeta)|^2)}{(1-|\zeta|^2)^k}(1+|\zeta|)^{k-1}.\]
Setting $\zeta=\|z\|_X=r<1$ then above equation Eqs. become 
\bea\label{eq11} |D^kF(z) (z^k)|\leq \frac{k!(1-|F(z)|^2)}{(1-\|z\|^2)^k}(1+\|z\|)^{k-1}\|z\|^k.\eea
Let $t_1(x)=\frac{(1+x)^{k-1}x^k}{(1-x^2)^k}$  and $t_2(x)=\frac{a+x}{1+ax}$ for  $0\le x\le x_0<1,$ where $k\ge 1$. A straightforward computation yields $t_1'(x)=\frac{x^{k-1}\bigl(k+(2k-1)x+x^2\bigr)}{(1-x)^{k+1}(1+x)^2}$ and $ t_2(x)=\frac{1-a^2}{(1+ax)^2}$.

Since $k\ge 1$ and $0\le x<1$, it follows that $t_1'(x)\ge 0$ and $t_2'(x)\geq 0$.
Hence $t_1(x)$ and $t_2(x)$ are  increasing functions on $[0,1]$. Since $\|\mu_m(z)\|_X\le \|z\|_X^{\,m}\leq 1$, then we get 
\[t_1(\|\mu_m(z)\|_X)\leq t_1(\|z\|_X^{\,m})\]
i.e., 
\[
\frac{\bigl(1+\|\mu_m(z)\|_X\bigr)^{k-1}}
     {\bigl(1-\|\mu_m(z)\|_X^2\bigr)^k}||\mu_m(z)|^k
\le
\frac{\bigl(1+\|z\|_X^{\,m}\bigr)^{k-1}}
     {\bigl(1-\|z\|_X^{\,2m}\bigr)^k}\|z\|_X^{\,km}\;\text{for all}\;k\geq 1
\]
and 
\[t_2(\|\mu_m(z)\|_X)\leq t_2(\|z\|_X^{\,m}) \]
i.e., 
\[|F(\mu_m(z))|\leq \frac{a+\|\mu_m(z)\|_X}{1+a\|\mu_m(z)\|_X}\leq \frac{a+\|z\|_X^m}{1+a\|z\|_X^m}.\]
Now from (\ref{eq11}), we deduce that 
\bea\label{eq12}|D^kF(\mu_m(z)) ((\mu_m(z))^k)|&\leq&  \frac{k!(1-|F(\mu_m(z)|^2)\bigl(1+|\mu_m(z)|\bigr)^{k-1}}{\bigl(1-|\mu_m(z)|^2\bigr)^k}\nonumber\\
&\leq& \frac{k!(1-|F(\mu_m(z)|^2)\bigl(1+\|z\|_X^{\,m}\bigr)^{k-1}}{\bigl(1-\|z\|_X^{\,2m}\bigr)^k}\|z\|_X^{\,km}.\eea

We now divide the proof into two cases according to the values of $(p)$.

{\bf Case 1.} Suppose that  $p=1$.
Let $x=|a|\in[0,1]$. If $x=1$, then $P_s(z_0)=0$ for all $s\ge1$, and
\eqref{T1.1} holds trivially. 

Hence, we assume $x\in[0,1)$. Using  \eqref{eq12}, Lemma \ref{L1} and \ref{L3}, we obtain
\begin{align}
\mathcal{J}_{F,1}(z,r):=&|F(\mu_m(z))|+|D F(\mu_m(z))\mu_m(z)|+\frac{1}{2!}|D^2 F(\mu_m(z)) ((\mu_m(z))^2)|\nonumber\\
&+\frac{1}{3!}|D^3 F(\mu_m(z)) ((\mu_m(z))^3)|+\sum_{j=4}^{\infty} |P_j(\mu_k(z))| +\frac{|P_1(\mu_k(z))|^2r^{2k}}{1-r^k}
\nonumber\\
&+\left(\frac{1}{1+|a|}+\frac{r^k}{1-r^k}\right)
\sum_{j=2}^{\infty} |P_j(\mu_k(z))|^2\nonumber\\ 
\le & |F(\mu_m(z))|+\frac{r^m}{1-r^{2m}}(1-|F(\mu_m(z))|^2)+\frac{r^{2m}(1+r^m)}{(1-r^{2m})^2}(1-|F(\mu_m(z))|^2)\nonumber\\
&\frac{r^{3m}(1+r^m)^2(1-|F(\mu_m(z))|^2)}{(1-r^{2m})^3}+(1-x^2)\frac{r^{4k}}{1-r^k} \nonumber\\
=& \frac{x+r^m}{1+xr^m}+\left(\frac{r^m}{1-r^{2m}}+\frac{r^{2m}(1+r^m)}{(1-r^{2m})^2}+\frac{r^{3m}(1+r^m)^2}{(1-r^{2m})^3}\right)\bigg(1-\left(\frac{x+r^m}{1+xr^m}\right)^2\bigg)\nonumber\\
&+(1-x^2)\frac{r^{4k}}{1-r^k}\nonumber\\
=& \frac{x+r^m}{1+xr^m}+\frac{r^m(1+r^m)(1+r^{3m})(1-x^2)}{(1-r^{2m})^2(1+xr^m)^2}+(1-x^2)\frac{r^{4k}}{1-r^k}\nonumber\\
=&1+\frac{(1-x)G_1(x,r)}{1+xr^m}.\label{eq:G1}
\end{align}
where 
\[
G_1(x,r)=-(1-r^m)+\frac{r^m(1+r^m)(1+r^{3m})(1+x)}{(1-r^{2m})^2(1+xr^m)}+(1+x)\frac{r^{4k}}{1-r^k}.
\]
It is worth pointing out that, in the third inequality of Eq. (\ref{eq:G1}), we have used the fact that 
\bea\label{kkkk1} \theta (t)=t+\lambda (1-t^2)\leq \theta(t_0),\eea
whenever \[t=F(\mu_m(z))\leq t_0=\frac{x+r^m}{1+xr^m} \;\text{and}\; \lambda=\frac{r^m(1+r^m)(1+r^{3m})}{(1-r^{2m})^3}.\]
Inequality (\ref{kkkk1}) holds provided that $\frac{r^m(1+r^m)(1+r^{3m})}{(1-r^{2m})^3}\leq \frac{1}{2}$, which is satisfied for $0\leq r\leq R_{1,m}$, where $R_{1,m}$ is the unique positive root of the equation 
\bea\label{R1m}\frac{r^m(1+r^{3m})}{(1-r^{2m})^2}-\frac{1-r^m}{2}=0.\eea

Now differentiating $G_1$ partially with respect to $x$, we get
\[
\frac{\partial G_1(x,r)}{\partial x}=\frac{r^m(1+r^m)(1+r^{3m})(1-r^m)}{(1-r^{2m})^2(1+xr^m)^2}+\frac{r^{4k}}{1-r^k}\geq 0.
\]
Therefore $G_1(x,r)$ is a monotonically increasing function of $x\in [0,1)$ and hence, we gave 
\[G_1(x,r)\leq G_1(1,r)=-(1-r^m)+\frac{2r^m(1+r^{3m})}{(1-r^{2m})^2}+\frac{2r^{4k}}{1-r^k}:=G(r).\]

Now differentiating $G(r)$ with respect to $r$, we get 
\[G(r)=mr^{m-1}+\frac{2mr^{m-1}\left(1+4r^{2m}+2r^{3m}+2r^{5m}+3r^{6m}\right)}{(1-r^{2m})^3}+\frac{2k r^{4k-1}(4-3r^k)}{(1-r^k)^2}\geq 0\]
Hence, $G$ is increasing on $[0,1)$. Moreover,
\[
G(0)=-1
\quad\text{and}\quad
\lim_{r\to 1^-}G(r)=+\infty.
\]
Therefore, there exists a unique root $R_{1,m,k}\in(0,1)$ of the equation $G(r)=0$. Consequently,
\[
\mathcal{J}_{f,1}(z)\le 1\]
for all $r\le R_{1,m,k}$.

We now claim that $R_{1,m,k}<R_{1,m}$. Suppose, on the contrary that $R_{1,m,k}\geq R_{1,m}$. Then for any $r\geq R_{1,m}$, we have $\frac{r^m(1+r^{3m})}{(1-r^{2m})^2}-\frac{1-r^m}{2}\geq  0$ and hence 
\beas -(1-r^m)+\frac{2r^m(1+r^{3m})}{(1-r^{2m})^2}+\frac{2r^{4k}}{1-r^k}>0.\eeas
This contradicts $R_{1,m,k}$ roots of (\ref{rr1}). Therefore $R_{1,m,k}< R_{1,m}$.

\medskip

{\bf Case 2.} For $p=2$. Assume that $|a|=x\in[0,1]$. If $x=1$, then $P_s(z_0)=0$ for all $s\ge1$, and
\eqref{T1.1} holds trivially. 

Hence, we assume $x\in[0,1)$. Using  \eqref{eq12}, Lemma \ref{L1} and \ref{L3}, we obtain
\begin{align}
\mathcal{J}_{F,2}(z,r):=&|F(\mu_m(z))|^2+|D F(\mu_m(z))\mu_m(z)|+\frac{1}{2!}|D^2 F(\mu_m(z)) ((\mu_m(z))^2)|\nonumber\\
&+\frac{1}{3!}|D^3 F(\mu_m(z)) ((\mu_m(z))^3)|+\sum_{j=4}^{\infty} |P_j(\mu_k(z))| +\frac{|P_1(\mu_k(z))|^2r^{2k}}{1-r^k}
\nonumber\\
&+\left(\frac{1}{1+|a|}+\frac{r^k}{1-r^k}\right)
\sum_{j=2}^{\infty} |P_j(\mu_k(z))|^2\nonumber\\ 
\le & |F(\mu_m(z))|^2+\frac{r^m}{1-r^{2m}}(1-|F(\mu_m(z))|^2)+\frac{r^{2m}(1+r^m)}{(1-r^{2m})^2}(1-|F(\mu_m(z))|^2)\nonumber\\
&+\frac{r^{3m}(1+r^m)^2(1-|F(\mu_m(z))|^2)}{(1-r^{2m})^3}+(1-x^2)\frac{r^{4k}}{1-r^k} \nonumber\\
=& \left(\frac{x+r^m}{1+xr^m}\right)^2+\left(\frac{r^m}{1-r^{2m}}+\frac{r^{2m}(1+r^m)}{(1-r^{2m})^2}+\frac{r^{3m}(1+r^m)^2}{(1-r^{2m})^3}\right)\bigg(1-\left(\frac{x+r^m}{1+xr^m}\right)^2\bigg)\nonumber\\
&+(1-x^2)\frac{r^{4k}}{1-r^k}\nonumber\\
=& \left(\frac{x+r^m}{1+xr^m}\right)^2+\frac{r^m(1+r^m)(1+r^{3m})(1-x^2)}{(1-r^{2m})^2(1+xr^m)^2}+(1-x^2)\frac{r^{4k}}{1-r^k}\nonumber\\
=&1+\frac{(1-x^2)G_1(x,r)}{(1+xr^m)^2},\label{eq:G2}
\end{align}
where $G_2(x,r)=-(1-r^{2m})+\frac{r^m(1+r^m)(1+r^{3m})}{(1-r^{2m})^2}+\frac{r^{4k}(1+xr^m)^2}{1-r^k}$.\\

The third inequality of Eq. (\ref{eq:G2}) holds for $r\in [0,1]$ satisfying $\frac{r^m(1+r^m)(1+r^{3m})}{(1-r^{2m})^3}\leq 1$, which is satisfied for $0\leq r\leq R_{2,m}$, where $R_{2,m}$ is the unique positive root of the equation 
\[\frac{r^m(1+r^m)(1+r^{3m})}{(1-r^{2m})^2}-(1-r^{2m})=0.\]

Differentiating $G_2(x,r)$ with respect to $x$, we obtain
\[
\frac{\partial G_2(x,r)}{\partial x}
=\frac{2r^{4k+m}(1+xr^m)}{1-r^k}\ge 0.
\]
Consequently, $G_2(x,r)$ is increasing in $x\in[0,1)$. Therefore,
\[
G_2(x,r)\le G_2(1,r)
=-(1-r^{2m})+\frac{r^m(1+r^m)(1+r^{3m})}{(1-r^{2m})^2}+\frac{r^{4k}(1+r^m)^2}{1-r^k}:=H(r).
\]
Now differentiating $G(r)$ with respect to $r$, we get 
\beas
H'(r)
&&=2m r^{2m-1}+
\frac{m r^{m-1}+2m r^{2m-1}+4m r^{4m-1}+5m r^{5m-1}}
     {(1-r^{2m})^2} \\
&&\quad+
\frac{4m\bigl(r^{3m-1}+r^{4m-1}+r^{6m-1}+r^{7m-1}\bigr)}
     {(1-r^{2m})^3} \\
&&\quad+
\frac{
\Bigl(4k r^{4k-1}(1+r^m)^2
+2m r^{4k+m-1}(1+r^m)\Bigr)(1-r^k)
+k r^{5k-1}(1+r^m)^2
}
{(1-r^k)^2}
\ge 0.
\eeas
Hence, $H$ is increasing on $[0,1)$. Moreover,
\[
H(0)=-1
\quad\text{and}\quad
\lim_{r\to 1^-}H(r)=+\infty.
\]
Therefore, there exists a unique root $R_{2,m,k}\in(0,1)$ of the equation $H(r)=0$. Consequently,
\[
\mathcal{J}_{f,2}(z,r)\le 1\]
for all $r\le R_{2,m,k}$.

We now claim that $R_{2,m,k}<R_{2,m}$. Suppose, on the contrary that $R_{2,m,k}\geq R_{2,m}$. Then for any $r\geq R_{2,m}$, we have $\frac{r^m(1+r^m)(1+r^{3m})}{(1-r^{2m})^2}-(1-r^{2m})\geq  0$ and hence 
\beas -(1-r^{2m})+\frac{r^m(1+r^m)(1+r^{3m})}{(1-r^{2m})^2}+\frac{r^{4k}(1+r^m)^2}{1-r^k}>0.\eeas
This contradicts $R_{2,m,k}$ roots of (\ref{rr2}). Therefore $R_{2,m,k} < R_{2,m}$.

\medskip
The second part of the proof is to show that equality $\mathcal{J}_{f,p}(z,r)=1$ occurs.
Fix $z_0\in\partial B_X$. For $b\in(0,1)$, define
\begin{equation}\label{F1}
F_1(z)=f(l_{z_0}(z)),\qquad z\in B_X,
\end{equation}
where
\[
f(\zeta)=\frac{b-\zeta}{1-b\zeta},\qquad \zeta\in\mathbb{U},
\]
and $l_{z_0}\in T(z_0)$. Let $\mu_m(z)=l_{z_0}(z)^{m-1}z, m\geq 1.$ Then, for $r\in(0,1)$,
\[
F_1(rz_0)=\frac{b-r}{1-br}
=b+\sum_{s=1}^{\infty}P_s(rz_0),
\]
where $P_s(z_0)=-(1-b^2)b^{s-1}, s\ge1.$ Consequently, $P_s(\mu_m(rz_0))=-(1-b^2)b^{s-1}r^{ms}.$

Moreover,
\[
DF_1(z)(z^2)
=\frac{(1-b^2)l_{z_0}(z)}
{\bigl(1-bl_{z_0}(z)\bigr)^2}, \;\text{and}\; D^2F_1(z)(z^2)
=\frac{2b(1-b^2)[l_{z_0}(z)]^2}
{\bigl(1-bl_{z_0}(z)\bigr)^3}.
\]
and
\[DF_1(z)(z^3)=-\frac{6b^2(1-b^2)r^3}{(1-br)^4}.\]
Thus for $z=rz_0$, we have 
\begin{align*}
\mathcal{J}_{F_1,p}(z,r):=&|F_1(\mu_m(rz_0))|^p+|D F_1(\mu_m(rz_0))\mu_m(rz_0)|+\frac{1}{2!}|D^2 F_1(\mu_m(rz_0)) ((\mu_m(rz_0))^2)|\\
&+\frac{1}{3!}|D^3 F_1(\mu_m(rz_0)) ((\mu_m(rz_0))^3)|+\sum_{j=4}^{\infty}|P_j(\mu_k(rz_0))|+\frac{|P_1(\mu_k(rz_0))|^2r^{k}}{1-r^k}\\
&+\left(\frac{1}{1+|a|}+\frac{r^k}{1-r^k}\right)\sum_{j=2}^{\infty}|P_j(\mu_k(rz_0))|^2 \\
=&\left(\frac{b-r^m}{1-br^m}\right)^p+\frac{(1-b^2)r^m}{(1-br^m)^2}+\frac{b(1-b^2)r^{2m}}{(1-br^m)^3}+\frac{b^2(1-b^2)r^{3m}}{(1-br^m)^4}\\
&+\frac{(1-b^2)b^3r^{4k}}{1-br^k}+\left(\frac{1}{1+|b|}+\frac{r^k}{1-r^k}\right)\frac{(1-b^2)^2\,r^{2k}}{1-b^2r^{2k}}\\
&=1+\frac{(1-b^2)D_{p}(b,r)}{(1-br^m)^2}
\end{align*}
where
\[
\begin{aligned}
D_1(b,r)
=&-(1+r^m)(1-br^m)
+r^m+\frac{br^{2m}}{1-br^m}
+\frac{b^2r^{3m}}{(1-br^m)^2}+\frac{b^3r^{4k}(1-br^m)^2}
{(1-br^k)(1-b^2)}
\\
&+(1-b^2)
\left(\frac{1}{1+b}+\frac{r^k}{1-r^k}\right)
\frac{r^{2k}(1-br^m)^2}
{1-b^2r^{2k}}.
\end{aligned}
\]
and 
\[
\begin{aligned}
D_2(b,r)
=&-(1-r^{2m})
+r^m+\frac{br^{2m}}{1-br^m}
+\frac{b^2r^{3m}}{(1-br^m)^2}
+\frac{b^3r^{4k}(1-br^m)^2}
{(1-br^k)(1-b^2)}\\
&+(1-b^2)
\left(\frac{1}{1+b}+\frac{r^k}{1-r^k}\right)
\frac{r^{2k}(1-br^m)^2}
{1-b^2r^{2k}}.
\end{aligned}
\]
Letting $b\to1^-$, we obtain
\beas
&&\lim_{b\to1^-}\mathcal{J}_{F_2,p}(z,r)=1.\eeas

\end{proof}

A major difficulty in addressing Question 2.2 is that the area estimate (\ref{Sr}) is intrinsically one-dimensional, relying on the planar geometry of $\mathbb C$ and Parseval's identity. Consequently, no direct analogue of the classical area functional is available in the Banach-space setting. To overcome this difficulty, we introduce the following area-type functional:
\[
\frac{\mathcal S_r(f)}{\pi}
:=
\frac{r^2(1-|a_0|^2)^2}
{(1-|a_0|^2r^2)^2}.
\]
This quantity preserves the extremal growth behaviour of the classical area term and therefore serves as a natural substitute for $S_r$ in the Banach-space framework.

The next theorem answers the above question affirmatively and may be viewed as a Banach-space analogue of Theorem E.

\begin{theo}\label{T3}
Let $X$ be a complex Banach space and let $F(z)=a_0+\sum_{n=1}^{\infty}P_n(z)$ be a holomorphic mapping from $B_X$ into $\ol{\mathbb U}$ with $|F(z)|\leq 1$ for $z\in B_X$, wherewhere $P_{s}(z)=\frac{1}{s!}D^{s}F(0)(z^{s})$ and $\mu_m:B_{X}\to B_{X}$ are  Schwarz mappings having $z=0$ as a zero of order $m$.

Then
\[
\begin{aligned}
\mathcal G_{1,F}(r)
:=&
|F(\mu_m(z))|
+|DF(\mu_m(z))(\mu_m(z))|
+\sum_{n=2}^{\infty}|P_n(\mu_m(z))|
\\
&\quad
+\left(\frac{1}{1+|a_0|}
+\frac{r^m}{1-r^m}\right)
\sum_{n=1}^{\infty}|P_n(\mu_m(z))|^2
+\lambda\frac{\mathcal S_{|\mu_m(z)|}(F)}{\pi}
\le1
\end{aligned}
\]
for $|z|=r\le R_m$, where $R_m$ is the unique positive solution of $r^m=\frac{\sqrt{17}-3}{4}$ and $\lambda=\frac{221-43\sqrt{17}}{64}.$

Furthermore, both $R_m$ and the constant $\frac{221-43\sqrt{17}}{64}$
are sharp.
\end{theo}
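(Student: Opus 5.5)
Fix $z\in B_X\setminus\{0\}$ with $\|z\|_X=r$, put $z_0=z/\|z\|_X$ and $f(\zeta)=F(\zeta z_0)$, exactly as in the proofs of Lemma \ref{L3} and Theorem \ref{T1}. The idea is to reduce everything to the one-variable estimates behind Theorem E, evaluated at $\rho:=\|\mu_m(z)\|_X\le r^m$ by \eqref{eq:Schwarz}.

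\textbf{Bounding the terms.} Write $x=|a_0|\in[0,1)$; the case $x=1$ is trivial. The terms are handled as follows.

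\begin{enumerate}
\item By Lemma \ref{L1}, $|F(\mu_m(z))|\le t_0:=\frac{x+r^m}{1+xr^m}$, using the monotonicity of $t\mapsto (x+t)/(1+xt)$ already used in Theorem \ref{T1}.
\item From \eqref{eq12} with $k=1$, $|DF(\mu_m(z))(\mu_m(z))|\le \frac{r^m}{1-r^{2m}}\bigl(1-|F(\mu_m(z))|^2\bigr)$.
\item Lemma \ref{L3} with $N=2$ (so $t=0$), evaluated with the Schwarz map $\mu_m$ and hence $r^k$ replaced by $r^m$, gives
\[
\sum_{n=2}^\infty|P_n(\mu_m(z))|+\Bigl(\frac{1}{1+x}+\frac{r^m}{1-r^m}\Bigr)\sum_{n=1}^\infty|P_n(\mu_m(z))|^2\le (1-x^2)\frac{r^{2m}}{1-r^m}.
\]
\item The area-type term satisfies $\frac{\rho^2(1-x^2)^2}{(1-x^2\rho^2)^2}\le \frac{r^{2m}(1-x^2)^2}{(1-x^2r^{2m})^2}$, since $s\mapsto s/(1-x^2s)$ is increasing.
\end{enumerate}

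\textbf{Reduction to Theorem E.} Using $\theta(t)=t+\frac{r^m}{1-r^{2m}}(1-t^2)\le\theta(t_0)$ for $t\le t_0$, which is valid once $\frac{r^m}{1-r^{2m}}\le\frac12$, the bound becomes a function of $s=r^m$ and $x$ only:
\[
\Phi(x,s)=\frac{x+s}{1+xs}+\frac{s(1-x^2)}{(1+xs)^2}+(1-x^2)\frac{s^2}{1-s}+\lambda\frac{s^2(1-x^2)^2}{(1-x^2s^2)^2}.
\]
This is exactly the majorant that Ahamed--Roy obtain in the scalar case for $G_{1,f}(r)$, with $r$ replaced by $s$. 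The condition $\frac{s}{1-s^2}\le \frac12$ holds for $s\le\sqrt2-1$, and $(\sqrt{17}-3)/4\approx0.28<\sqrt2-1$. The one-variable computation behind Theorem E then shows $\Phi(x,s)\le1$ for $s\le(\sqrt{17}-3)/4$ with the stated $\lambda$.

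I would redo it to be safe. Write $\Phi=1+(1-x)\Psi(x,s)$. Then show that $\Psi$ is increasing in $s$, so it suffices to take $s_0=(\sqrt{17}-3)/4$, where $2s_0^2+3s_0-1=0$, i.e.\ $\frac{2s_0}{1-s_0}\cdot s_0=1-2s_0$, so the non-area part vanishes at $x\to1$. Finally verify $\Psi(x,s_0)\le0$ for all $x\in[0,1)$. This is a polynomial inequality in $x$ after clearing denominators, and the value of $\lambda$ is precisely the largest constant keeping it nonpositive. Verifying this single-variable inequality, essentially a second-order expansion near $x=1$ plus a global check, is the main obstacle. I would first expand in $1-x$ to see that $\lambda=\frac{221-43\sqrt{17}}{64}$ makes the leading coefficient vanish, then check the remaining polynomial has constant sign on $[0,1]$.

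\textbf{Sharpness.} Use $F_1$ from \eqref{F1} with $\mu_m(z)=l_{z_0}(z)^{m-1}z$, so $\mu_m(rz_0)=r^mz_0$ and every term equals the scalar quantity for $f_b(\zeta)=\frac{b-\zeta}{1-b\zeta}$ at $|\zeta|=r^m$. The coefficient-square sum is computed exactly by geometric series, and for this extremal function the area surrogate coincides with the true $S_{r^m}/\pi$. Expanding $\mathcal G_{1,F_1}=1+(1-b)\Psi_b(r^m)+O((1-b)^2)$ as $b\to1^-$ shows the following. If $r^m>s_0$, the first-order term is positive, so $\mathcal G_{1,F_1}>1$ for $b$ near $1$ and the radius cannot be enlarged. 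At $r^m=s_0$ the first-order term vanishes, and any $\lambda$ exceeding $\frac{221-43\sqrt{17}}{64}$ makes the second-order term positive, so $\lambda$ is also sharp.
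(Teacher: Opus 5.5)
Your proof of the inequality follows the paper's route. Lemma \ref{L1}, \eqref{eq12} with $k=1$, Lemma \ref{L3} with $N=2$, and monotonicity in $\|\mu_m(z)\|_X\le r^m$ give the scalar majorant $\Phi(x,r^m)$. Like the paper, you leave the one-variable check $\Psi(x,s_0)\le0$ on $[0,1)$ to the Ahamed--Roy computation. There is one algebraic slip: $2s_0^2+3s_0-1=0$ is not equivalent to $\frac{2s_0^2}{1-s_0}=1-2s_0$, which forces $s_0=1/3$. What you actually need is $\Psi(1,s)=\frac{(2s^2+3s-1)(1+s^2)}{(1+s)^2(1-s)}$.

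The genuine gap is in the sharpness argument. You use $F_1$ from \eqref{F1}, i.e.\ $f_b(\zeta)=\frac{b-\zeta}{1-b\zeta}$, at $z=rz_0$. Then $\mu_m(rz_0)=sz_0$ with $s=r^m>0$ lies on the positive axis, where $|f_b|$ is \emph{minimal}. So $|F_1(\mu_m(rz_0))|=\frac{b-s}{1-bs}$ instead of $\frac{b+s}{1+bs}$, and $|DF_1(w)(w)|=\frac{(1-b^2)s}{(1-bs)^2}$. The step $\theta(t)\le\theta(t_0)$ in your chain therefore loses a quantity of first order in $1-b$, and one finds
\[
\mathcal G_{1,F_1}-1=(1-b)\,\frac{-2s^3+3s^2+2s-1}{(1-s)^2}+O\bigl((1-b)^2\bigr).
\]
The leading coefficient is about $-0.48$ at $s=s_0$ and stays negative up to $s\approx0.355$. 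So for this family the first-order term does not vanish at $s_0$ and is not positive just beyond it. For every fixed $\lambda$ the functional stays below $1$ when $b$ is near $1$. This family therefore proves sharpness of neither $R_m$ nor $\lambda$.

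The fix is to evaluate at a point where every estimate in your chain is an equality. Take $z=e^{i\pi/m}rz_0$, so $\|z\|_X=r$ and $\mu_m(z)=-r^mz_0$. Alternatively, as the paper does, use $f(\zeta)=\frac{b+\zeta}{1+b\zeta}$ at $z=rz_0$. Then Lemma \ref{L1} and Schwarz--Pick are attained by the disk automorphism. The coefficient part equals $(1-b^2)\frac{s^2}{1-s}$ exactly, by summing the geometric series, and the area surrogate matches by definition. Hence $\mathcal G_{1,F}=\Phi(b,s)=1+(1-b)\Psi(b,s)$. Now $\Psi(1,s)>0$ for $s>s_0$ gives sharpness of $R_m$. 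At $s=s_0$, any $\lambda'>\frac{221-43\sqrt{17}}{64}$ makes $\partial_x\Psi(1,s_0)<0$, which gives sharpness of $\lambda$, as you intended.
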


\begin{rem}
When $X=\mathbb C$ and $m=1$, we have $\mu_1(z)=z$ and $R_1=\frac{\sqrt{17}-3}{4}.$
Therefore, Theorem E is recovered as a special case of the above theorem.
\end{rem}
Figure \ref{f33} illustrates the locations of these roots for selected values of $m$, confirming the numerical results.
 
\centering
\begin{figure}[H]
\centering
\includegraphics[width=\textwidth]{P3.png}
\caption{The graphs exhibit the locations of the roots $R_{m}$ in $(0,1)$ for different value of $m$.}
\label{f33}
\end{figure}
\begin{proof}[{\bf Proof of Theorem \ref{T3}}]  Let $|a_0|=a\in(0,1)$. Combining Lemma \ref{L3} (with $N=2$) and (\ref{eq12}) together with the classical estimate for $|F(z)|$, we obtain
\[
\begin{aligned}
\mathcal{G}_{1,f}(r)
&:= |F(\mu_m(z))|+|DF(\mu_m(z))(\mu_m(z))| +\sum_{n=2}^{\infty}|P_n(\mu_m(z))| \\
&+\left(\frac{1}{1+|a_0|}+\frac{r^m}{1-r^m}\right)\sum_{n=1}^{\infty}|P_n(\mu_m(z))|^2+\lambda\frac{\mathcal S_|\mu_m(z)|(f)}{\pi}\\
&=|F(\mu_m(z))|+\frac{r^m}{1-r^{2m}}\bigl(1-|F(z)|^2\bigr)
+\sum_{n=2}^{\infty}(1-a^2)(r^m)^n
\\
&\quad+\left(\frac{1}{1+|a_0|}+\frac{r^m}{1-r^m}\right)\sum_{n=1}^{\infty}(1-a^2)^2(r^m)^{2n} +\lambda \frac{S_r}{\pi} \\
&\le \frac{a+r^m}{1+ar^m}
+\frac{r^m}{1-r^{2m}}
\left[1-\left(\frac{a+r^m}{1+ar^m}\right)^2\right]
+\frac{(1-a^2)r^{2m}}{1-r^m}
+\lambda \frac{(1-a^2)^2r^{2m}}{(1-a^2r^{2m})^2},
\qquad 0<r<1 \\
&= \frac{a+r^m}{1+ar^m}
+\frac{(1-a^2)r^m}{(1+ar^m)^2}
+\frac{(1-a^2)r^{2m}}{1-r^m}
+\lambda \frac{(1-a^2)^2r^{2m}}{(1-a^2r^{2m})^2} \\
&=: I(r).
\end{aligned}
\]
It is easy to see that $I(r)$ is an increasing function on $0\leq r\leq R_m$, where $R_m$ is the unique postive root of $r^m=\frac{\sqrt{17}-3}{4}$ in $(0,1)$. Then we have
\[
\begin{aligned}
I(r)\leq I(R_m)&=
\frac{(\sqrt{17}-3)+4a}
     {4+(\sqrt{17}-3)a}
+\frac{(\sqrt{17}-3)^2(1-a^2)}
       {4(7-\sqrt{17})}
+\frac{4(\sqrt{17}-3)(1-a^2)}
       {\bigl(4+(\sqrt{17}-3)a\bigr)^2}
\\
&\quad
+\frac{8\lambda(13-3\sqrt{17})(1-a^2)^2}
{\left(8+(-13+3\sqrt{17})a^2\right)^2}.
\end{aligned}
\]
The remainder of the proof follows along the same lines as that of \cite[Theorem 2.3]{AR2025}. Therefore, we omit the routine details and obtain the desired conclusion.

\medskip

To establish the sharpness of the constant $\lambda=\frac{221-43\sqrt{17}}{64},$
we consider an extremal family of holomorphic mappings.

Fix \(z_0\in\partial B_X\). For \(a\in(0,1)\), define
\begin{equation}\label{F2}
F(z)=f(l_{z_0}(z)),\qquad z\in B_X,
\end{equation}
where $f(\zeta)=\frac{a+\zeta}{1+a\zeta},
\zeta\in\mathbb D,$
and \(l_{z_0}\in T(z_0)\). Let $\mu_m(z)=l_{z_0}(z)^{m-1}z, m\ge1.$
Then, for \(r\in(0,1)\),
\[
F(rz_0)=\frac{a+r}{1+ar}
      =a+\sum_{s=1}^{\infty}P_s(rz_0),
\]
where $P_s(z_0)=(1-a^2)(-a)^{s-1}, s\ge1.$ Consequently, $P_s(\mu_m(rz_0)) =(1-a^2)(-a)^{s-1}r^{ms}.$

Moreover,
\[
DF(z)(z)
=
\frac{(1-a^2)l_{z_0}(z)}
     {(1+al_{z_0}(z))^2}.
\]

Now let \(z=R_mz_0\), where $R_m^m=\frac{\sqrt{17}-3}{4}$ and let \(\varepsilon>0\). A straightforward computation shows that
\[
\begin{aligned}
\mathcal{G}_{F_2}(R_mz_0)
&=
|F(\mu_m(R_mz_0))|
+|DF(\mu_m(R_mz_0))(\mu_m(R_mz_0))|
+\sum_{n=2}^{\infty}|P_n(\mu_m(R_mz_0))|
\\
&\quad
+\left(\frac{1}{1+|a_0|}
+\frac{R_m^m}{1-R_m^m}\right)
\sum_{n=1}^{\infty}|P_n(\mu_m(R_mz_0))|^2
+\left(\frac{221-43\sqrt{17}}{64}
+\varepsilon\right)
\frac{\mathcal S_{|\mu_m(R_mz_0)|}(F)}{\pi}
\\
&=
\frac{R_m^m+a}{1+aR_m^m}
+\frac{(1-a^2)R_m^m}
       {(1+aR_m^m)^2}
+\frac{(1-a^2)aR_m^{2m}}
       {(1-aR_m^m)(1+aR_m^m)}
+\frac{1+aR_m^m}
       {(1+a)(1-R_m^m)}
\,\frac{(1-a^2)^2R_m^{2m}}
       {1-a^2R_m^{2m}}
\\
&\quad
+\left(\frac{221-43\sqrt{17}}{64}
+\varepsilon\right)
\frac{(1-a^2)^2R_m^{2m}}
     {(1-a^2R_m^{2m})^2}
\\
&=
1+
\frac{(1-a)^3F_2(a)}
     {(7-\sqrt{17})
      \bigl(-16+(26-6\sqrt{17})a^2\bigr)^2}
+\varepsilon
\frac{64(1-a^2)^2(71-17\sqrt{17})}
     {(7-\sqrt{17})
      \bigl(-16+(26-6\sqrt{17})a^2\bigr)^2},
\end{aligned}
\]
where
\[
\begin{aligned}
F_2(a)
={}&
(23126-5658\sqrt{17})
+2(28769-6983\sqrt{17})a
\\
&\quad
+24(2041-495\sqrt{17})a^2
+8(2041-495\sqrt{17})a^3.
\end{aligned}
\]

As \(a\to1^{-}\), the first correction term is of order \((1-a)^3\), whereas the second is of order \((1-a)^2\). Therefore,
\[
\mathcal{G}_{F_2}(R_mz_0)=1+C(1-a)^2\varepsilon
+o\bigl((1-a)^2\bigr),
\]
where $C=\frac{256(71-17\sqrt{17})}{(7-\sqrt{17})\bigl(-16+(26-6\sqrt{17})\bigr)^2}>0.$

Hence, $\mathcal{G}_{1,F}(R_mz_0)>1$  for all \(a\) sufficiently close to \(1\). Consequently, the asserted inequality fails whenever the coefficient of the area term exceeds $\frac{221-43\sqrt{17}}{64}.$

Therefore, the constant $\lambda=\frac{221-43\sqrt{17}}{64}$
is best possible. This completes the proof.

\end{proof}

\section*{Declarations}

\subsection*{Funding}
The second author acknowledges financial support from the Council of Scientific and Industrial Research (CSIR), New Delhi, India under Grant No. 09/1224(16975)/2023-EMR-I.

\subsection*{Data Availability Statement}
Not applicable.

\subsection*{Conflict of Interest}
The author declares that there is no conflict of interest.

\subsection*{Clinical Trial Number} not applicable.
\subsection*{Author Contribution} All authors contributed equally to this work

\end{document}